\numberwithin{equation}{section}
\def\H{\mathcal H}
\def\R{\mathbb R}
\def\N{\mathbb N}
\newcommand{\dist}{\mathop{\mathrm{dist}}}
\def\e{\varepsilon}
\def\vphi{\varphi}
\def\Id{{\rm Id}}
\def\spt{{\rm spt}}
\def\pa{\partial}
\def\00{{\bf 0}}
\def\P{\mathcal{P}}
\renewcommand{\b}{\beta}
\newcommand{\D}{\Delta}
\renewcommand{\L}{\Lambda}
\newcommand{\tr}{\mbox{tr }}
\def\Lip{{\rm Lip}\,}
\newcommand\res{\mathop{\hbox{\vrule height 7pt width .3pt depth 0pt \vrule height .3pt width 5pt depth 0pt}}\nolimits}
\def\FF{\mathbf{F}}
\def\D{\mathsf{D}}
\def\Id{\mathrm{Id}}
\theoremstyle{plain}
\newtheorem{theorem}{Theorem} [section]
\newtheorem{lemma}[theorem]{Lemma}
\theoremstyle{definition}
\newtheorem{definition}[theorem]{Definition}
\newtheorem{remark}[theorem]{Remark}
\newtheorem*{ack}{Acknowledgements}
\title{Minimization of anisotropic energies in classes of rectifiable varifolds}
\author{Antonio De Rosa}
\address{Institut f\"ur Mathematik, Universitaet Z\"urich, Winterthurerstrasse 190, CH-8057 Z\"urich, Switzerland}
\email{antonio.derosa@math.uzh.ch}
\begin{document}

\begin{abstract} 
We consider the minimization problem of an anisotropic energy in classes of $d$-rectifiable varifolds in $\R^n$, closed under Lipschitz deformations and encoding a suitable notion of boundary. We prove that any minimizing sequence with density uniformly bounded from below converges (up to subsequences) to a $d$-rectifiable varifold. Moreover, the limiting varifold is integral, provided the minimizing sequence is made of integral varifolds with uniformly locally bounded anisotropic first variation.
\end{abstract}

\maketitle

\section{Introduction}\label{intro}

In the recent paper \cite{DeLGhiMag}, De Lellis, Ghiraldin and Maggi propose a direct approach to the minimization of the Hausdorff measure in certain classes of sets of codimension one, which has proven to be fairly general to solve different formulations of the Plateau problem. Their result has been extended in \cite{DePDeRGhi} by De Philippis, Ghiraldin and the author to the general codimension case  and in \cite{DeLDeRGhi16} by De Lellis, Ghiraldin and the author to the anisotropic setting (but in codimension one). 

One of the main issues in these compactness results was the rectifiability of the minimizing set. For the area functional, an important rectifiability result of Preiss \cite{Preiss} (see also \cite{DeLellisNOTES}) and the powerful monotonicity formula were used to solve this task.
In the anisotropic case, the lack of monotonicity formulas has imposed to find a different strategy in \cite{DeLDeRGhi16}, which however is not applicable in the general codimension setting, because it is based on the theory of Caccioppoli sets.

De Philippis, Ghiraldin and the author proved in \cite{DePDeRGhi2} the anisotropic counterpart of the Allard's rectifiability theorem, \cite{Allard}, for varifolds with bounded first variation with respect to an anisotropic integrand. This tool will be applied by the same authors in \cite{DePDeRGhi3} to extend the solutions of the Plateau problem in \cite{DeLGhiMag,DePDeRGhi,DeLDeRGhi16} to the anisotropic functionals in general codimension.

As in the case of the area integrand, 
\cite{DeGiorgiSOFP1,federer60,reifenberg1,Allard,AlmgrenINVITATION,DavidSemmes,
HarrisonPugh14,DeLGhiMag,DePDeRGhi}, many definitions of boundary conditions (both homological and homotopical), 
as well as the type of competitors (currents, varifolds, sets) have been considered in the literature for the minimization of elliptic integrands, \cite{Almgren68,Almgren76,schoensimonalmgren,HarrisonPugh15,HarrisonPugh16}.
An existence and regularity result in arbitrary dimension and codimension for homological boundary constraints was achieved by Almgren in \cite{Almgren68} using as tool the space of varifolds, encoding the notion of multiplicity.

The aim of this paper is to extend the aforementioned results \cite{DeLGhiMag,DePDeRGhi,DeLDeRGhi16,DePDeRGhi3} to the minimization of an anisotropic energy on classes of rectifiable varifolds in any dimension and codimension, see Theorem \ref{thm generale}. The limit of a minimizing sequence of varifolds with density uniformly bounded from below is proven to be rectifiable. Moreover, with the further assumption that the minimizing sequence is made of integral varifolds with uniformly locally bounded anisotropic first variation, the limiting varifold turns out to be also integral.

We remark that every sequence of rectifiable (resp. integral) varifolds enjoying a uniform bound on the mass and on the isotropic first variation is precompact in the space of rectifiable (resp. integral) varifolds. This has been proved by Allard in \cite[Section 6.4]{Allard}, see also \cite[Theorem 42.7 and Remark 42.8]{SimonLN}. 

One of the main results of this work is indeed an anisotropic counterpart of the aforementioned compactness for integral varifolds, in the assumption that the limiting varifold has positive lower density, see Theorem \ref{integrality}. 

The additional tool available in the isotropic setting is the monotonicity formula for the mass ratio of stationary varifolds, which ensures that the density function is upper semicontinuous with respect to the convergence of varifolds. This property allows the limiting varifold to inherit the lower density bound of the sequence.

The monotonicity formula is deeply linked to the isotropic case, see \cite{Allardratio}. Nonetheless, given a minimizing sequence of varifolds for an elliptic integrand, we are able to get a density lower bound for the limiting varifold via a deformation theorem for rectifiable varifolds, see Theorem \ref{deformationcube}. We can obtain it modifying \cite[Proposition 3.1]{DavidSemmes}, proved by David and Semmes for closed sets. Thanks to the density lower bound and the anisotropic stationarity of the limiting varifold, we can conclude directly its rectifiability applying the main theorem of \cite{DePDeRGhi2}, see Theorem \ref{thm:rect}. The integrality result requires additional work, see Lemma \ref{integrality}: the idea is to blow-up every varifold of the minimizing sequence in a point in which the limiting varifold has Grassmannian part supported on a single $d$-plane $S$ (note that this property holds $\|V\|$-a.e. by the previously proved rectifiability). Applying a result proved in \cite{DePDeRGhi2}, see Lemma \ref{t:integrality}, on a diagonal sequence of blown-up varifolds, we get that roughly speaking their projections on $S$ converge in total variation to an $L^1$ function on $S$. This function is integer valued thanks to the integrality assumption on the minimizing sequence and coincides with the density of the limiting varifold in the blow-up point, which is consequently an integer. Since the argument holds true for $\|V\|$-a.e. point, the limiting varifold turns out to be integral.

\begin{ack}
The author is grateful to Guido De Philippis for useful discussions. This work has been supported by SNF 159403 {\it Regularity questions in geometric measure theory}. 
\end{ack}

\section{Notation and main result}
We will always work in \(\R^n\) and \(1\le d\le n \) will always be an integer number.
For any subset $X\subseteq \R^n$, we denote $\overline X$ its closure, Int$(X)$ its interior and $X^c:=\R^n\setminus X$ its complementary set.

We are going to use the following notation:  $Q_{x,l}$  denotes the closed cube centered in the point $x\in \R^n$, 
with edge length $l$; moreover we set 
\begin{equation}\label{rettangoli}
\quad B_{x,r}:=\{y \in \R^n \ : \ |y-x|<r\}.
\end{equation}
When cubes and balls are centered in the origin, we will simply write  $ Q_{l}$ and $B_{r}$. Cubes and balls in the subspace $\R^d\times\{0\}^{n-d}$ are denoted with $Q_{x,l}^d$ and $B_{x,r}^{d}$ respectively. 

 For a matrix  \(A\in \R^n\otimes\R^n\),  \(A^*\) denotes its transpose. Given   \(A,B\in \R^n\otimes\R^n\) we define \( A:B=\tr A^* B=\sum_{ij} A_{ij} B_{ij}\), so that \(|A|^2= A:A\).

\subsection{Measures  and rectifiable sets}
Given a locally compact metric space $Y$, we denote by $\mathcal M_+(Y)$ the set of positive Radon measures in $Y$, namely the set of measure on the $\sigma$-algebra of Borel sets of $Y$ that are locally finite and inner regular.
In particular we consider the subset of Borel probability measures $\mathcal M_P(Y)\subset \mathcal M_+(Y)$, namely $\mu \in \mathcal M_P(Y)$ if $\mu \in \mathcal M_+(Y)$ and $\mu(Y)=1$.

For a  Borel set \(E\),  \(\mu\res E\) is the  restriction of \(\mu\) to \(E\), i.e. the measure defined by \([\mu\res E](A)=\mu(E\cap A)\). 

For an \(\R^m\)-valued Radon measure on $\R^n$, \(\mu\in \mathcal M(\R^n,\R^m)\), we denote by \(|\mu|\in \mathcal M_+(\R^n)\) its total variation and we recall that, for every open subset \(U\subseteq \R^n\),
\[
|\mu|(U)=\sup\Bigg\{ \int \langle \varphi(x) ,d\mu(x)\rangle\,:\quad \varphi\in C_c^\infty(U,\R^m),\quad \|\varphi\|_\infty \le 1 \Bigg\}.
 \] 
Eventually, we denote by  $\H^d$   the  $d$-dimensional Hausdorff measure and for a \(d\)-dimensional vector space \(T\subseteq \R^n\) 
we will often identify \(\H^d\res T\) with the \(d\)-dimensional Lebesgue measure \(\mathcal L^d\) on \(T\approx \R^d\).

A  set \(K\) is said to be \(d\)-rectifiable if it can be covered, 
up to an \(\H^d\)-negligible set, by countably many \(C^1\) $d$-dimensional 
submanifolds.   Given a $d$-rectifiable set $K$, we denote $T_xK$ the approximate tangent space of $K$ at $x$, which exists for $\H^d$-almost every point $x \in K$, \cite[Chapter 3]{SimonLN}. 

For  \(\mu\in \mathcal M_+(\R^n) \) we consider its lower and upper  \(d\)-dimensional densities at \(x\):
\[
\Theta_*^d(x,\mu)=\liminf _{r\to 0} \frac{\mu(B_{x,r})}{ \omega_d r^d}, \qquad \Theta^{d*}(x,\mu)=\limsup_{r\to 0} \frac{\mu(B_{x,r})}{ \omega_d r^d},
\]
where \(\omega_d=\H^d(B^d)\) is the measure of the \(d\)-dimensional unit ball in \(\R^d\). In case these  two limits are equal, we denote by \(\Theta^d(x,\mu)\) their common value. Note that, if $\mu= \theta\H^d \res K$ with \(K\) $d$-rectifiable, then \(\theta(x)=\Theta_*^d(x,\mu)=\Theta^{d*}(x,\mu)\) for \(\mu\)-a.e. \(x\), see~\cite[Chapter 3]{SimonLN}, and $\mu$ is denoted as a $d$-rectifiable measure.

If \(\eta :\R^n\to \R^n\) is a Borel map and \(\mu\) is a Radon measure, we let \(\eta_\# \mu=\mu\circ \eta^{-1}\) be the push-forward of \(\mu\) through \(\eta\).

\subsection{Varifolds and integrands}
 We denote by $G=G(n,d)$ the Grassmannian of unoriented $d$-dimensional hyperplanes in $\R^{n}$ and, for every $U\subseteq \R^n$ we define $G(U):=U\times G$.
 
We define the space of $d$-varifold as $\mathbf V_d=\mathcal M_+(G(\R^n))$. A $d$-varifold $V\in \mathbf V_d$ is said $d$-rectifiable if there exists a $d$-rectifiable set $M$ and a function $\theta\in L^1(\R^n;\R^+;\H^d\res M)$, such that 
\begin{equation}\label{rappresenta}
V=\theta \H^d \res M \otimes \delta_{T_xM}.
\end{equation}
We denote with $\mathbf R_d\subseteq \mathbf V_d$ the subset of the $d$-rectifiable varifolds.

Moreover we say that a $d$-rectifiable varifold $V$ is integral, or equivalently $V \in \mathbf I_d$, if in the representation \eqref{rappresenta}, the density function $\theta$ is also integer valued.

Given $V \in \mathbf V_d$ and a map $\psi \in C^1(\R^n;\R^n)$, we define the push forward $\psi_\# V \in \mathbf V_d$ of $V$ with respect to $\psi$ as
$$\int_{G(\R^n)}\phi(x,T)d(\psi_\#V)(x,T)=\int_{G(\R^n)}\phi(\psi(x),d\psi_x(T))J\psi(x,T) dV(x,T),\quad \forall \phi\in C^0_c(G(\R^n)),$$
 where $d\psi_x(T)$ is the image of $T$ under the linear map $d\psi_x$ and $J\psi(x,T)$ denotes the $d$-Jacobian determinant of the differential $d\psi_x$ restricted to the $d$-plane $T$, see \cite{SimonLN}.
Note that the push-forward of a varifold \(V\) is {\em not} the same as the push-forward of the  Radon measure \(V\) through a map $\psi$ defined on the Grassmannian bundle $G(\R^n)$ (the meaning of push-forward will be always clear by the domain of the map $\psi$ and in both cases denoted with \(\psi_\# V\)).

The symbol $\|V\|$ will denote the Radon measure $\pi_\# V$ on $\R^n$, 
where $\pi : G(\R^n) \rightarrow  \R^n : (x,T) \mapsto x$ 
is the projection on the first factor.

If $V \in \mathbf R_d$, it is representable as in \eqref{rappresenta} and consequently we can extend the notion of push forward with respect to maps $\psi:\R^n \to \R^n$ which are merely Lipschitz as follows (see \cite[Section 15]{SimonLN}):
$$\psi_\#V:=\tilde \theta \H^d \res \psi(M) \otimes \delta_{T_x\psi(M)},\qquad \mbox{where $\tilde \theta(x):=\int_{\psi^{-1}(x)\cap M} \theta d\H^0$.}$$
We remark that $\tilde \theta$ is defined for $\H^d$-a.e. point of $\psi(M)$.

Observe that the following equality holds (see \cite[Section 15]{SimonLN}):
\begin{equation}\label{mass push}
\|\psi_\#V\|(A)=\int_{\psi(M)\cap A}\tilde \theta d\H^d=\int_{M\cap \psi^{-1}(A)} \theta J_M\psi d\H^d, \quad \forall \mbox{ Borel set } A\subseteq \R^n,
\end{equation}
where $J_M\psi(y)$ denotes the Jacobian determinant of the tangential differential $d_M\psi_y: T_yM\to \R^{n}$, see \cite[Sections 12 and 15]{SimonLN}.

Given $V \in \mathbf V_d$, we define respectively the lower and upper densities of $V$ at a point $x$ as follows
\[
\Theta^d_*(x,V)=\Theta^d_*(x,\|V\|)\qquad\textrm{and}\qquad\Theta^{d*}(x,V)=\Theta^{d*}(x,\|V\|).
\] 
In case $\Theta^d_*(x,V)=\Theta^{d*}(x,V)$, we denote the common value $\Theta^d(x,V)$ and it will be referred to as density of $V$ at $x$.
Note that if $V \in \mathbf R_d$, then \(\Theta^d_*(x,V)=\Theta^{d*}(x,V)=\Theta^d(x,V)\) for \(\|V\|\)-a.e. \(x\), see~\cite[Chapter 3]{SimonLN}.

We will call concentration set of $V \in \mathbf R_d$ the set 
$$\mbox{conc}(V):=\{x \in \R^n \, : \, \Theta^d_*(x,V)>0\},$$
and we will equivalently say that $V$ is concentrated on $\mbox{conc}(V)$.

The anisotropic Lagrangians that we consider are \(C^1\) integrand 
$$
F: G(\R^n) \longrightarrow R_{>0}:=(0,+\infty),
$$
for which there exist two positive constants \(\lambda, \Lambda\) such that
\begin{equation}\label{cost per area}
0 < \lambda \leq F(x,T) \leq \Lambda<\infty\qquad\textrm{for all \((x,T)\in G(\R^n)\).}
\end{equation}
Given $V \in \mathbf V_d$ and an open subset $U\subseteq \R^n$, we define:
\begin{equation}\label{energia}
\FF(V,U) := \int_{G(U)} F(x,T)\, dV(x,T) \mbox{ \ \ and \ \ } \FF(V) := \FF(V,\R^{n}).
\end{equation}
For a vector field \(g\in C_c^1(\R^n,\R^n)\), we consider the family of functions \(\varphi_t(x)=x+tg(x)\), and we note that they are diffeomorphisms of \(\R^n\) into itself. The {\em anisotropic first variation} is  defined as the following linear operator on $C_c^1(\R^n;\R^n)$:  
\[
\delta_F V(g):=\frac{d}{dt}\FF\big ( \varphi_t^{\#}V\big)\Big|_{t=0}.
\]
It can be easily shown, see \cite[Appendix A]{DePDeRGhi2}, that
 \begin{equation}\label{eq:firstvariation}
\delta_F V(g) = \int_{G(\Omega)} \Big[\langle d_xF(x,T),g(x)\rangle+ B_F(x,T):Dg(x)  \Big] dV(x,T),
 \end{equation}
where  the matrix \(B_F(x,T)\in \R^n\otimes\R^n\) is uniquely defined.

 We are going to use the  following properties of  \(B_F(x,T)\), see \cite[Section 2.2]{DePDeRGhi2}:
\begin{equation}\label{operatore}
|B_F(x,T)|\le |dF(x,T)| |T| \qquad  \textrm{for all \((x,T) \in G(\R^n)\) },
\end{equation}
and, if we define the Lagrangian \(F_r(z,T):=F(x+rz,T)\), then 
 \begin{equation}\label{eq:firstvariationfroze}
B_F(x+rz,T)=B_{F_r}(z,T)\qquad  \textrm{for all \((z,T) \in G(\R^n)\)}.
 \end{equation}
Given an open subset $U\subset \R^n$, we will say that \(V\) is {\em \(\FF\)-stationary} in $U$ if \(\delta_FV(g) =0\) for every \(g\in C_c^1(U,\R^n)\).

The anisotropic Lagrangians that we will use in the sequel are required to verify the following ellipticity property (called \emph{atomic condition}) at every point \(x\in \R^n\):
\begin{definition}\label{atomica}
Given an integrand  $F\in C^1(G(\R^n)) $, \(x\in \R^n\)  and a Borel  probability measure $\mu \in \mathcal M_P (G(n,d))$, let us define  
\begin{equation}\label{eq:A}
A_x(\mu):=\int_{G(n,d)} B_F(x,T)d\mu(T)\in \R^n\otimes \R^n.
\end{equation}
We say that \(F\) verifies the \emph{atomic condition} $(AC)$ at \(x\) if the following two conditions are satisfied:
\begin{itemize}
\item[(i)]  \(\dim\ker A_x(\mu)\le n-d\) for all \(\mu \in \mathcal M_P (G(n,d))\),
\item[(ii)]  if  \(\dim\ker A_x(\mu)= n-d\), then  \(\mu=\delta_{T_0}\) for some \(T_0\in G(n,d)\).
\end{itemize}
\end{definition}
This condition has been introduced in \cite[Definition 1.1]{DePDeRGhi2} and it has been proved to be necessary and sufficient to obtain an Allard type rectifiability result, see \cite[Theorem 1.2]{DePDeRGhi2}. 

We require the atomic condition in Definition \ref{atomica} in order to conclude the rectifiability of the limiting varifold in Theorem \ref{thm generale}.

\subsection{Competitors}
Throughout all the paper, $H\subseteq \mathbb R^{n}$ will denote a closed subset of $\R^{n}$. 
Assume to have a class of varifolds $\mathcal{P} (H,F)\subseteq \mathbf R_d$ encoding a notion of boundary: 
one can then formulate the anisotropic Plateau problem by asking whether the infimum
\begin{equation}
  \label{plateau problem generale}
m_0 :=  \inf \big\{\FF(V) : V\in \mathcal{P} (H,F)\big \}
\end{equation}
is achieved by some varifold (which is the limit of a minimizing sequence), if it belongs to the chosen class $\P(H)$ and which additional  regularity properties  it satisfies. We will say that a sequence $(V_j)_{j\in \N} \subseteq \mathcal{P} (H,F)$ is a {\em minimizing sequence} if   $\FF(V_j) \downarrow m_0$.

We need to introduce some minimal requirements for the class $\mathcal{P} (H,F)$. Roughly speaking,  $\mathcal{P} (H,F)$ has to be closed by a space of deformations that we define as in \cite{DePDeRGhi}:
\begin{definition}[Lipschitz deformations]\label{d:deform}
Given a ball $B_{x,r}$, we let $\mathfrak D(x,r)$ be the set of functions $\vphi:\R^n \rightarrow \R^n$ 
such that $\varphi(z)=z$ in $\R^n\setminus B_{x,r}$ and which are smoothly isotopic 
to the identity inside $B_{x,r}$, namely those for which there exists an isotopy $\lambda \in C^\infty([0,1]\times \R^n;\R^n)$ such that 
$$\lambda(0,\cdot) = \Id, \quad \lambda(1,\cdot)=\vphi, \quad \lambda(t,h)=h \quad\forall\,(t,h)\in [0,1]\times (\R^n \setminus B_{x,r}) \quad \mbox{ and } $$
$$ \lambda(t,\cdot) \mbox{\ is a diffeomorphism of } \R^n \ \forall t \in [0,1].
$$
We finally set $\D(x,r):=\overline{\mathfrak D(x,r)}^{C^0}\cap \Lip (\mathbb R^{n})$, the sequential closure of $\mathfrak D(x,r)$ with respect to the uniform convergence, intersected with the space of Lipschitz maps.
\end{definition}

The classes that we are going to consider can be now defined:

\begin{definition}[Deformed competitors and good class]\label{def good class}
Let $H\subseteq \mathbb R^{n}$ be a closed set and  $V\in \mathbf R_d$.
A  {\em deformed competitor} for $V$ in $B_{x,r}$ is any varifold
\begin{equation*}
 \varphi_\#  V \in \mathbf R_d \quad \mbox{ where } \quad \varphi \in \D(x,r).
\end{equation*}
We say that $\mathcal{P} (H,F)$ is a {\emph good class} with respect to $H$ and $F$ if $\mathcal{P} (H,F)\subseteq \mathbf R_d$ and for every $V\in\mathcal{P} (H,F)$ it holds:
\begin{itemize}
\item $\mbox{conc}(V)$ is a relatively closed subset of $\R^n\setminus H$;
\item for every $x\in \R^n \setminus H$ and for a.e. $r\in (0, \dist (x, H))$
\begin{equation}
  \label{inf good class}
  \inf \big\{ \FF(W) : W\in \mathcal{P} (H,F)\,,W \res G(\overline{B_{x,r}}^c) =V\res G(\overline
{B_{x,r}}^c) \big\} \leq \FF(L), \,
\end{equation}
whenever $L$ is any deformed competitor for $V$ in $B_{x,r}$.
\end{itemize}  
\end{definition}

\begin{remark}\label{r:lagr}
Given $V\in \mathbf R_d$ and a deformation $\varphi \in \D(x,r)$, using property \eqref{cost per area}, we deduce the quasiminimality property
 \begin{equation}\label{quasiminimo}
 \FF(\varphi_\#V)\leq \Lambda \|\varphi_\#V\|(\R^n) \leq \Lambda (\Lip(\varphi))^d \|V\|(\R^n)  \leq \frac{\Lambda}{\lambda} (\Lip(\varphi))^d \FF(V).
 \end{equation}
\end{remark}

\subsection{The main result}
We can now state our main result:
\begin{theorem}\label{thm generale}
Let $F\in C^1(G(\R^n))$ be a Lagrangian satisfying the atomic condition at every point \(x\in \R^n\) and enjoying the bounds \eqref{cost per area}. Let $H\subseteq \mathbb R^{n}$ be a closed set and $\mathcal{P} (H,F)$ be a good class with respect to $H$ and $F$. Assume the infimum in Plateau problem \eqref{plateau problem generale} is finite and let 
$(V_j)_{j\in \N}\subseteq \mathcal{P} (H,F)$ be a minimizing sequence. Then, up to subsequences, $V_j$ converges to a $d$-varifold $V \in \mathbf V_d$ with the following properties:
\begin{itemize}
\item[(a)] $\liminf_j \FF(V_j) \geq  \FF(V)$;
\item[(b)] if $V \in \mathcal{P} (H,F)$, then $V$ is a minimum for \eqref{plateau problem generale};
\item[(c)] $V$ is \(\FF\)-stationary in $\R^{n}\setminus H$.
\end{itemize}
Furthermore:
\begin{itemize}
\item[(d)] if the minimizing sequence $(V_j)_{j\in \N}$ enjoys a uniform density lower bound in $\R^n\setminus H$, i.e. there exists $\delta>0$ such that:
$$\Theta^d(x,V_j)\geq \delta, \qquad \mbox{for } \|V_j\|\mbox{-a.e. } x\in \R^n\setminus H, \, \forall j\in \N,$$
then $V\res G(\R^n\setminus H)\in \mathbf R_d$ and $\mbox{conc}(V)$ is relatively closed in $\R^n\setminus H$;
\item[(e)] if the minimizing sequence $(V_j)_{j\in \N}$ satisfies $(V_j\res G(\R^n\setminus H))_{j\in \N} \subseteq \mathbf I_d$ and  
\begin{equation}\label{assumption e}
\sup_j |\delta_FV_j|(W) <\infty, \qquad \forall \, W \subset \subset \R^n \setminus H,
\end{equation}
then $V\res G(\R^n\setminus H)\in \mathbf I_d$.
\end{itemize}
\end{theorem}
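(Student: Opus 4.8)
The plan is to organize the argument around the auxiliary results advertised in the introduction, proving the five claims in order and using each as input for the next. For \emph{(a)}, I would extract a weak-$*$ convergent subsequence of varifolds (by the uniform mass bound, which follows from $\FF(V_j)\to m_0<\infty$ and the lower bound $\lambda$ in \eqref{cost per area}) and then invoke lower semicontinuity of $V\mapsto\FF(V)=\int_{G(\R^n)}F\,dV$ with respect to varifold convergence, which holds because $F$ is a nonnegative continuous integrand; this is the standard Reshetnyak-type lower semicontinuity. Claim \emph{(b)} is then immediate: if $V\in\P(H,F)$ then $\FF(V)\le\liminf_j\FF(V_j)=m_0$ and $V$ is admissible, so it is a minimum. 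For \emph{(c)}, I would fix $g\in C_c^1(\R^n\setminus H;\R^n)$, consider the flow $\varphi_t=x+tg$, and use the quasiminimality inequality \eqref{inf good class}/\eqref{quasiminimo} together with the defining property of a good class to show that $\FF(V_j)\le\FF((\varphi_t)_\# V_j)+o(1)$ holds for the competitors obtained by plugging the deformation into the class; differentiating in $t$ at $t=0$ and passing to the limit gives $\delta_F V(g)\ge 0$, and replacing $g$ by $-g$ yields $\delta_F V(g)=0$. Here one must be careful that $\varphi_t$ lies in $\D(x,r)$ for $g$ supported in a ball, which is true since the flow of a compactly supported vector field is smoothly isotopic to the identity.

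The substantive part is \emph{(d)}. The obstacle is exactly the one flagged in the introduction: without a monotonicity formula, the density function is not known to be upper semicontinuous under varifold convergence, so the limit $V$ could in principle have $\|V\|$-positive mass on a set of zero lower density. The remedy is the deformation theorem for rectifiable varifolds, Theorem \ref{deformationcube} (the varifold analogue of \cite[Proposition 3.1]{DavidSemmes}): given a cube $Q$ in $\R^n\setminus H$ in which $\|V\|(Q)$ is small relative to $\ell(Q)^d$, one constructs Lipschitz deformations, supported in a neighborhood of $Q$ and lying in the relevant deformation class, that push the mass of each $V_j$ inside $Q$ onto the $d$-skeleton of a fine grid while increasing the energy by at most a controlled multiplicative factor; iterating and using the energy-minimality of the sequence forces $\|V_j\|(Q)$ — and hence, in the limit, $\|V\|(Q)$ — to be comparable to $\ell(Q)^d$ or else zero. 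Quantitatively this produces a constant $c_0=c_0(n,d,\lambda,\Lambda)>0$ such that $\Theta^{d*}(x,V)\ge c_0$ for $\|V\|$-a.e.\ $x\in\R^n\setminus H$, i.e.\ a \emph{uniform} upper density lower bound for the limit. Combined with \emph{(c)} — $V$ is $\FF$-stationary in $\R^n\setminus H$ and $F$ satisfies the atomic condition at every point — this is precisely the hypothesis of the anisotropic Allard rectifiability theorem \cite[Theorem 1.2]{DePDeRGhi2}, restated here as Theorem \ref{thm:rect}, which yields $V\res G(\R^n\setminus H)\in\mathbf R_d$. Relative closedness of $\mathrm{conc}(V)$ in $\R^n\setminus H$ then follows from the same uniform lower density bound (the set where $\Theta^d_*\ge c_0$ is closed by a standard measure-theoretic argument, since on $\R^n\setminus H$ we now know $\Theta^d_*=\Theta^{d*}$ a.e.).

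For \emph{(e)} I would follow the blow-up scheme sketched in the introduction. By part \emph{(d)}, $V\res G(\R^n\setminus H)$ is rectifiable, so for $\|V\|$-a.e.\ $x_0\in\R^n\setminus H$ the Grassmannian slice of $V$ at $x_0$ is $\delta_{S}$ for a single $d$-plane $S=T_{x_0}M$, and the rescalings $V_{x_0,r}$ converge as $r\to0$ to the flat varifold $\Theta^d(x_0,V)\,\H^d\res S\otimes\delta_S$. Fix such an $x_0$. For each $j$ the rescaled varifolds $(V_j)_{x_0,r}$ are integral (rescaling preserves integrality), and by \eqref{assumption e} together with the rescaling identity \eqref{eq:firstvariationfroze} for $B_F$ they have locally bounded anisotropic first variation uniformly in $r\le r_0$; moreover their masses are locally bounded by the quasiminimality. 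Choosing a diagonal sequence $W_k:=(V_{j_k})_{x_0,r_k}$ with $r_k\to0$ and $j_k\to\infty$ appropriately so that $W_k\rightharpoonup\Theta^d(x_0,V)\,\H^d\res S\otimes\delta_S$, I would apply the structure result Lemma \ref{t:integrality} of \cite{DePDeRGhi2}: for integral varifolds with locally bounded anisotropic first variation converging to a flat limit, the pushforwards under the orthogonal projection $\mathbf p_S$ onto $S$ converge in $L^1_{loc}(S)$ (in total variation of the associated integer-valued multiplicity functions) to an $L^1$ density on $S$; being an $L^1_{loc}$-limit of integer-valued functions, that density is integer-valued a.e., and by the identification of the limit it equals the constant $\Theta^d(x_0,V)$ near $0$. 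Hence $\Theta^d(x_0,V)\in\N$. Since this holds for $\|V\|$-a.e.\ $x_0\in\R^n\setminus H$ and $V\res G(\R^n\setminus H)$ is rectifiable with density $\Theta^d(\cdot,V)$, we conclude $V\res G(\R^n\setminus H)\in\mathbf I_d$. The delicate point in this last step is ensuring that the diagonal extraction can be performed so that the projected multiplicities genuinely converge in total variation — this is where Lemma \ref{t:integrality} does the real work, and one must check its hypotheses (the uniform first-variation bound after blow-up, via \eqref{operatore}–\eqref{eq:firstvariationfroze}, and the choice of a good point $x_0$) are met.
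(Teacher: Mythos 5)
Your proposal follows the same architecture as the paper: (a)--(b) by lower semicontinuity, (c) by comparison with deformed competitors through the good-class property, (d) by the deformation theorem (Theorem \ref{deformationcube}) combined with the anisotropic Allard theorem (Theorem \ref{thm:rect}), and (e) by the blow-up argument based on Lemma \ref{t:integrality}, which is exactly the content of Theorem \ref{integrality}. However, three steps are asserted in a form in which they would fail or do not match the hypotheses you invoke. In (c), you cannot ``differentiate in $t$ at $t=0$'' the inequality $\FF(V_j)\le\FF((\varphi_t)_\#V_j)+o_j(1)$ for fixed $j$: the $o_j(1)$ error destroys all information about the derivative at $t=0$; and you cannot simply ``pass to the limit'' in $\FF((\varphi_t)_\#V_j)$ either, because $\FF$ is only lower semicontinuous under varifold convergence, which is the wrong direction for the comparison you need. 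The paper avoids this by a contradiction argument with a \emph{finite-time} deformation $\varphi_\e$ and a localization on a ball $B_{\alpha R}$ whose boundary is not charged ($\FF(V,\partial B_{\alpha R})=0$), where the energies genuinely converge and outside of which $\varphi_\e=\Id$. Your route could instead be repaired by expanding $\FF((\varphi_t)_\#V_j)=\FF(V_j)+t\,\delta_FV_j(g)+R_j(t)$ with a remainder bound $|R_j(t)|\le\omega(t)\,t$ uniform in $j$ (using the uniform mass bound) and by noting that $\delta_FV_j(g)\to\delta_FV(g)$, since the first-variation integrand is a fixed continuous compactly supported function on $G(\R^n)$; but neither ingredient appears in your sketch, so as written this step is not a proof. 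One should also justify that the deformation is admissible, i.e. supported in a ball $B_{x,r}$ with $r<\dist(x,H)$ as Definition \ref{def good class} requires (localize $g$ using linearity of $\delta_FV$).

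In (d), you claim the deformation argument only yields $\Theta^{d*}(x,V)\ge c_0$ with $c_0=c_0(n,d,\lambda,\Lambda)$ and then feed this into Theorem \ref{thm:rect}; but that theorem requires the \emph{lower} density $\Theta^d_*(x,V)$ to be positive, so an upper-density bound does not suffice as stated. The iteration on nested cubes actually proves the stronger estimate \eqref{lower density estimate mu ball}, namely $\|V\|(B_{x,r})\ge\theta_0\,\omega_d r^d$ for every $x\in\spt\|V\|$ and every $r<\dist(x,H)$, which gives the lower density bound and also the identification $\mathrm{conc}(V)\setminus H=\spt\|V\|\setminus H$, hence the relative closedness. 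Moreover $\theta_0$ must depend on $\delta$, and your sketch never uses the hypothesis $\Theta^d(\cdot,V_j)\ge\delta$: it is needed to apply Theorem \ref{deformationcube} (after replacing $V_j$ by $\frac{1}{\delta}V_j$, which remains in a good class) and it is what makes the dichotomy in property (6) of that theorem available, which is the engine of the ``comparable to $\ell^d$ or zero'' alternative. Finally, in (e) the assertion that \eqref{assumption e} together with \eqref{eq:firstvariationfroze} gives a first-variation bound for the blow-ups uniform in the scale is false for $d\ge 2$: the rescaled first variation on $B_1$ is of order $r^{1-d}|\delta_FV_j|(B_{x_0,r})$, so \eqref{assumption e} alone gives nothing as $r\to 0$. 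The proof of Theorem \ref{integrality} selects $x_0$ by Besicovitch differentiation of $\nu=\lim_j|\delta_FV_j|$ with respect to $\|V\|$ (see \eqref{bes}), chooses the diagonal $j_k=j(r_k)$ so that $|\delta_FV_{j_k}|(B_{x_0,r_k})\le 2C_{x_0}\|V\|(B_{x_0,r_k})\approx r_k^d$, and multiplies by a cutoff to meet the equi-compact support hypothesis of Lemma \ref{t:integrality}; these are precisely the missing mechanisms behind the hypotheses you acknowledge ``one must check'' but do not verify.
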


\begin{remark}
If the assumption $(V_j\res G(\R^n\setminus H))_{j\in \N} \subseteq \mathbf I_d$ required in the condition $(e)$ of Theorem \ref{thm generale} is satisfied, also condition $(d)$ applies, with the trivial density lower bound $\delta=1$.
\end{remark}

\textbf{Open Question}
One may conjecture that, given an integral minimizing sequence in a good class $\mathcal P(H)$, there always exists a minimizing sequence in $\mathcal P(H)$ satisfying \eqref{assumption e}.

 The idea is to perform a kind of motion by mean curvature and is described in the following example (for simplicity $F$ is assumed to be the area functional): fix a ball $B$ and assume that the limiting varifold $V$ restricted to $B$ is the one density varifold naturally associated to one of the diameters of $B$ and that the general element $V_j$ of the minimizing sequence is the one density varifold associated to the union of the same diameter and of $j$ disjoint circles of radius $\frac{1}{j^2}$ contained in the ball. Of course
 $$\|V_j\|(B) = \|V\|(B)+2\pi \frac{j}{j^2} \to  \|V\|(B),$$
 but
  $$|\delta_FV_j|(B)=2\pi j$$
 and \eqref{assumption e} does not hold.
 Nevertheless, one can deform every $V_j$, collapsing each of the $j$ circles in its origin, via a Lipschitz map. This procedure generates a new minimizing sequence, which is still in the class, and that satisfies \eqref{assumption e}.
 
 Of course, for a general minimizing sequence this argument is far from being a proof and the conjecture seems to the author quite a delicate task.

\section{Preliminary results}\label{notation}

A key result we are going to use is a deformation theorem for rectifiable varifolds with density bigger or equal than one, that we prove in this section. It is the analogous of the deformation theorem for closed sets, due to David and Semmes \cite[Proposition 3.1]{DavidSemmes}, and of the one for rectifiable currents \cite{SimonLN,FedererBOOK}. 

The proof relies on the one of \cite[Proposition 3.1]{DavidSemmes}.

Before stating the theorem, let us introduce some further notation.
Given a closed cube $Q=Q_{x,l}$  and $\e > 0$, we cover $Q$ with a grid of closed smaller cubes 
with edge length $\e \ll l$, with non empty intersection with $\mbox {Int} (Q)$ 
and such that the decomposition is centered in $x$ (i.e. one of the 
subcubes is centered in $x$). The family of this smaller cubes is denoted $\L_\e(Q)$. 
We set 
\begin{equation}\label{cornici}
\begin{gathered}
C_1:=\bigcup \left \{T \cap Q: T \in \L_\e(Q), T \cap \pa Q \neq \emptyset  \right \},\\
C_2:=  \bigcup \left \{T \in \L_\e(Q) :  (T\cap Q) \not \subseteq C_1, T \cap \pa C_1  \neq \emptyset \right \},\\
Q^1:=\overline{Q \setminus (C_1 \cup C_2)}
\end{gathered}
\end{equation}
and consequently
$$\L_\e(Q^1 \cup C_2):= \left \{T \in \L_\e(Q) : T \subseteq (Q^1 \cup C_2)\right \}.$$
For each nonnegative integer $m\leq n$, let $\L_{\e,m}(Q^1 \cup C_2)$ denote the collection 
of all $m$-dimensional faces of cubes in $\L_\e(Q^1 \cup C_2)$ and 
$\L^*_{\e,m}(Q^1 \cup C_2)$ will be the set of the elements of $\L_{\e,m}(Q^1 \cup C_2)$ 
which are not contained in $\pa (Q^1 \cup C_2)$. We also let  $S_{\e,m}(Q^1 \cup C_2):= \bigcup \L_{\e,m}(Q^1 \cup C_2)$ be the $m$-skeleton of order $\e$ in $Q^1 \cup C_2$.

\begin{theorem}\label{deformationcube}
Given $x_0\in \R^n$, $r>0$, a closed cube $Q \subseteq B_{x_0,r}$ and $V\in \mathbf R_d$ such that: 
$$V:=\theta \H^d\res K\otimes \delta_{T_xK}, \qquad \mbox{where}\qquad  \theta(x) \geq 1 \quad \mbox{for }\H^d\res K-\mbox{a.e. } x\in Q,$$
$$\mbox{ $K\cap Q$ is a closed set \quad and \quad $\|V\|(Q)<+\infty$}.$$ 
 
Then there exists a map $\Phi_{\e,V} \in \D(x_0,r)$ satisfying the following properties:
\begin{itemize}
\item[(1)]  $\Phi_{\e,V}(x)=x \ \mbox{for} \ x \in \R^n \setminus (Q^1 \cup C_2)$;
\item[(2)]  $\Phi_{\e,V}(x)=x \ \mbox{for} \ x \in S_{\e,d-1}(Q^1 \cup C_2)$;
\item[(3)]  $\Phi_{\e,V}(K\cap (Q^1 \cup C_2))\subseteq S_{\e,d}(Q^1 \cup C_2)\cup \pa(Q^1 \cup C_2)$;
\item[(4)]  $\Phi_{\e,V}(T)\subseteq T$ for every $T \in \L_{\e,m}(Q^1 \cup C_2)$, with $m=d,...,n$;
\item[(5)] $\|(\Phi_{\e,V})_\#V\|(T)\leq k_1\|V\|(T)$ for every $T \in \L_{\e}(Q^1 \cup C_2)$;
\item[(6)]  either $\|(\Phi_{\e,V})_\#V\|(T)=0$ or $\|(\Phi_{\e,V})_\#V\|(T)\geq \H^d(T)$, for every $T \in \L^*_{\e,d}(Q^1)$;
\end{itemize}
where $k_1$ depends only on $n$ and $d$ (but neither on $\e$ nor on $V$).
\end{theorem}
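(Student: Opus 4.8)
The strategy is the classical cubical deformation (push-out to the $d$-skeleton) of Federer--Fleming type, adapted to rectifiable varifolds with density at least one and following \cite[Proposition 3.1]{DavidSemmes}. The first step is to fix the grid $\L_\e(Q)$ and the frames $C_1,C_2,Q^1$ as in \eqref{cornici}; the two-layer frame is there so that the final map can be glued to the identity near $\pa Q$ (hence extended by the identity to all of $\R^n$) while still being an admissible Lipschitz deformation, i.e. a uniform limit of elements of $\mathfrak D(x_0,r)$. I would work one dimension at a time: starting from the top-dimensional cubes and descending, on each $m$-cube $T\in\L_{\e,m}(Q^1\cup C_2)$ with $m>d$ one chooses a center $y_T$ in the interior of $T$ away from the (already lower-dimensional) image of $K$, and radially projects $T\setminus\{y_T\}$ onto $\pa T$. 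Composing these radial projections for $m=n,n-1,\dots,d+1$ yields a Lipschitz map $\Phi_{\e,V}$ which fixes $\R^n\setminus(Q^1\cup C_2)$ and the $d$-skeleton, maps each face into itself, and sends $K\cap(Q^1\cup C_2)$ into $S_{\e,d}(Q^1\cup C_2)\cup\pa(Q^1\cup C_2)$. This gives properties (1)--(4) more or less by construction.

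The quantitative heart is choosing the projection centers $y_T$ well. For a single radial projection $p_{y}:T\setminus\{y\}\to\pa T$ from a cube of side $\e$, one has the bound $\|(p_y)_\#W\|(\pa T)\le c(n)\,(\dist(y,\spt\|W\|)/\e)^{-?}\,$-type estimates; more precisely, the pushforward mass is controlled by an integral of $(\e/|z-y|)^{m}$ against $\|W\|$, so averaging over $y$ in, say, the middle subcube of $T$ shows that for a positive-measure set of centers one gets $\|(p_y)_\#W\|(\pa T)\le c(n,d)\|W\|(T)$. Here one uses that the restriction of the relevant varifold to the $m$-skeleton is still rectifiable with finite mass, so $\H^m$-a.e. point of a face is a point where the projected measure is finite. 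Iterating across the finitely many dimensions $m=n,\dots,d+1$ and the finitely many cubes accumulates only a dimensional constant, yielding $k_1=k_1(n,d)$ and property (5) (using \eqref{mass push} to compare $\|(\Phi_{\e,V})_\#V\|$ with the image of $\|V\|$ under the tangential Jacobian, which on each face is again bounded by a dimensional constant times the restriction of $\|V\|$). Property (6) is the density-one input: on a $d$-face $T\in\L^*_{\e,d}(Q^1)$ not contained in $\pa(Q^1\cup C_2)$, if $\|(\Phi_{\e,V})_\#V\|(T)>0$ then $\Phi_{\e,V}(K)$ covers a positive-$\H^d$-measure subset of the flat piece $T$, and since the density of the (rectifiable) pushforward is an integer-valued-from-below quantity — it is at least $\theta\ge 1$ wherever the original sheet lands, by the area formula \eqref{mass push} — one concludes $\|(\Phi_{\e,V})_\#V\|(T)\ge\H^d(T)$; a short topological/covering argument (as in David--Semmes) shows that once a $d$-face is ``hit'' by the projected set it must be hit on a full-measure subset, because the deformed set is a union of closed $d$-faces.

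The main obstacle, in my view, is property (6) together with the requirement $\Phi_{\e,V}\in\D(x_0,r)$: the naive radial projections are not diffeomorphisms (they are singular at the centers and only Lipschitz on the faces), so one must realize $\Phi_{\e,V}$ as a $C^0$-limit of genuine smooth isotopies-to-the-identity supported in $B_{x_0,r}$, which forces the two-frame construction $C_1,C_2$ and a careful interpolation near $\pa(Q^1\cup C_2)$; and the ``all-or-nothing'' alternative in (6) requires that the center choices in the $(d+1)$-to-$d$ step be made not just generically but so that the image of $K$ on each surviving $d$-face is either empty or carries at least the whole face with multiplicity $\ge1$, which is where both the closedness of $K\cap Q$ and the density lower bound $\theta\ge 1$ are essential. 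The remaining estimates (5) and the face-preservation (4) are routine once the centers are fixed, via \eqref{mass push} and the elementary Lipschitz bounds on radial projections from cubes.
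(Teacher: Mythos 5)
Your plan for properties (1)--(5) is essentially the paper's: a descending family of Lipschitz skeleton projections $\Phi_n,\Phi_{n-1},\dots,\Phi_d$, with the radial-projection centers on each $m$-cell ($m>d$) chosen by averaging the pushforward mass over candidate centers and applying Fubini (integrability of $|x-z|^{-d}$ on an $m$-cell uses $m>d$), which is exactly the content of Lemma \ref{intermedio} and Lemma \ref{finale}. The genuine gap is property (6). You claim that if $\|(\Phi_{\e,V})_\#V\|(T)>0$ for some $T \in \L^*_{\e,d}(Q^1)$, then the image must cover a full-measure subset of $T$ ``because the deformed set is a union of closed $d$-faces''. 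That is false: the image $\Phi_d(K\cap (Q^1\cup C_2))$ is only a compact subset of the $d$-skeleton, not a union of entire faces, and the density hypothesis only gives $\|(\Phi_d)_\#V\|(T)\geq \H^d\big(\Phi_d(K\cap Q)\cap T\big)$, a quantity that can lie strictly between $0$ and $\H^d(T)$. Your fallback suggestion --- choosing the centers in the $(d+1)\to d$ projection step ``not just generically'' so that each face is either missed or fully covered --- does not work either: the trace of the image on a $d$-face is determined by the set being deformed and is not controllable in that way by the choice of centers one dimension up.

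What the paper does instead (following David--Semmes) is to add one further map after reaching the $d$-skeleton: $\Phi_{d-1}:=\Psi\circ\Phi_d$, where on each face $T\in\L^*_{\e,d}(Q^1)$ with $0<\|(\Phi_d)_\#V\|(T)<\H^d(T)$ one uses precisely the density bound (so that $\H^d(T)>\|(\Phi_d)_\#V\|(T)\geq \H^d(\Phi_d(K\cap Q)\cap T)$) together with the compactness of $\Phi_d(K\cap Q)$ to find a ball $B_{\delta_T}(y_T)\subseteq T$ disjoint from the image, and defines $\Psi$ on $T$ as the radial push from $y_T$ onto $\pa T$; on all other faces $\Psi$ is the identity, and $\Psi$ is extended to the higher-dimensional cells by the homogeneous (pyramid/cone) extension and by the identity outside $Q^1\cup C_2$. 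This empties every ``intermediate'' face, so the dichotomy in (6) is realized by forcing the first alternative to hold there, not by proving the second; one then checks $\Psi\in\D(x_0,r)$ and uses the closure of $\D(x_0,r)$ under composition. Without this extra emptying step your argument does not establish (6), which is exactly the property the paper later relies on in the lower-density estimate.
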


\begin{proof}
Our map $\Phi_{\e,V}$ can be obtained as the last element of a finite sequence $\Phi_n,\Phi_{n-1},...,$ $\Phi_d,\Phi_{d-1}$ of Lipschitz maps on $\R^n$.
The maps $\Phi_m$ with $m=d,...,n$ will satisfy the analogous of $(1)-(5)$, with $(2)$ and $(3)$ replaced by
$$
\Phi_m(x)=x \ \mbox{for} \ x \in S_{\e,m}(Q^1 \cup C_2),
$$
$$
\Phi_m(K\cap (Q^1 \cup C_2))\subseteq S_{\e,m}(Q^1 \cup C_2)\cup \pa(Q^1 \cup C_2).
$$
The last map $\Phi_{d-1}$ will be constructed in order to satisfy also property $(6)$.

We start with $\Phi_n(x):=x$, which verifies all the required conditions. Suppose that, for a given $m>d$, we have already built $\Phi_n,\Phi_{n-1},...,\Phi_m$. We want to define $\Phi_{m-1}$ as
\begin{equation}\label{mappa}
\Phi_{m-1}:= \psi_{m-1}\circ \Phi_{m},
\end{equation}
where $\psi_{m-1}$ is a Lipschitz map in $\R^n$ given by the following Lemma:
\begin{lemma}\label{intermedio}
The exists a Lipschitz map $\psi_{m-1}:Q^1 \cup C_2 \to Q^1 \cup C_2$ such that:
$$
\psi_{m-1}(x)=x \ \mbox{for} \ x \in S_{\e,m-1}(Q^1 \cup C_2)\cup \pa(Q^1 \cup C_2),
$$
$$
\psi_{m-1}(\Phi_m(K\cap (Q^1 \cup C_2)))\subseteq S_{\e,m-1}(Q^1 \cup C_2)\cup \pa(Q^1 \cup C_2),
$$
$$
\psi_{m-1}(T)\subseteq T \mbox{  for every }T \in \L_{\e,m}(Q^1 \cup C_2), \mbox{ with }m=d,...,n,
$$
and
\begin{equation}\label{difference}
\|(\psi_{m-1}\circ \Phi_{m})_\#V\|(T)\leq C\|(\Phi_m)_\#V\|(T) \mbox{ for every }T \in \L_{\e}(Q^1 \cup C_2),
\end{equation}
where $C$ depends only on $m$ and $d$.
\end{lemma}

Assuming Lemma \ref{intermedio}, we can easily extend $\psi_{m-1}$ to be the identity outside $Q^1 \cup C_2$ and the map $\Phi_{m-1}$ defined in \eqref{mappa} satisfies the desired properties. 

To conclude, we need to construct $\Phi_{d-1}$ in order to satisfy also condition $(6)$. We proceed in a way analogous to the one used in \cite[Theorem 2.4]{DePDeRGhi}.

We want to set $$\Phi_{d-1}:=\Psi \circ \Phi_{d},$$
where $\Psi$ will be defined below.
We first define $\Psi$ on every $T \in \L_{\e,d}(Q^1 \cup C_2)$ distinguishing two cases
\begin{itemize}
\item[(a)]  if either $\|(\Phi_{d})_\#V\|(T)=0$ or $\|(\Phi_{d})_\#V\|(T)\geq \H^d(T)$ or $T \not \in \L^*_{\e,d}(Q^1)$, then we set $\Psi_{|T}=\Id$;
\item[(b)]  otherwise, since the varifold density $\Theta (x,V)$ is bigger or equal than one for $\|V\|$-a.e. $x \in Q$, the same holds for $(\Phi_{d})_\#V$, because $\Phi_{d}$ is a Lipschitz map. We infer that 
$$\H^d(T)> \|(\Phi_{d})_\#V\|(T)\geq \H^d(\Phi_{d}(K\cap Q)\cap T).$$
 Since $\Phi_{d}(K\cap Q)$ is compact ($K\cap Q$ is compact by assumption), there exists $y_T \in T$ and $\delta_T>0$ such that $B_{\delta_T}(y_T)\cap \Phi_{d}(K\cap Q)=\emptyset$; we define 
$$\Psi_{|T}(x)=x+\alpha(x-y_T) \min \left \{1,\frac{|x-y_T|}{\delta_T}\right\},$$
where $\alpha>0$ such that the point $x+\alpha(x-y_T)\in  \left (\pa T\right ) \times \{0\}^{n-d}$.
\end{itemize}
The second step is to define $\Psi$ on every $T' \in \L_{\e,d+1}(Q^1 \cup C_2)$. Without loss of generality, we can assume $T'$ centered in $0$. 
We divide $T'$ in pyramids $P_{T,T'}$ with base $T\in \L_{\e,d}(Q^1 \cup C_2)$ and vertex $0$. Assuming $T \subseteq \{x_{d+1}=-\frac \e 2, x_{d+2},...,x_n=0\}$ and $T' \subseteq \{x_{d+2},...,x_n=0\}$, we set
$$\Psi_{|P_{T,T'}}(x)=-\frac{2x_{d+1}}{\e}\Psi_{|T}\left(-\frac{x}{x_{d+1}}\frac \e 2\right).$$
We iterate  this procedure on all the dimensions till to $n$, defining it well in $Q^1 \cup C_2$. 
Since $\Psi_{|\pa (Q^1 \cup C_2)}=\Id$, we can extend the map as the identity outside $Q^1 \cup C_2$.

By construction of $\Psi$, if we denote 
$$(\Psi \circ \Phi_{d})_\#V=\tilde \theta \H^d \res (\Psi \circ \Phi_{d})(K) \otimes \delta_{T_x(\Psi \circ \Phi_{d})(K)},$$
 we get that $\tilde \theta = 0$ in the interior of $T$, and we can assume this is true also at the boundary since $\H^d(\partial T)=0$ and $\tilde \theta$ is defined $\H^d$-a.e..

We consequently get:
$$\|(\Psi \circ \Phi_{d})_\#V\|(T)=\int_{(\Psi \circ \Phi_{d})(K)\cap T}\tilde \theta d\H^d=0,$$
and so property $(6)$ is now satisfied.

In addition, one can easily check that $\Psi \in \D(x_0,r)$ and thus, since  \(\Phi_{d}\in \D(x_0,r)\) and the class \( \D(x_0,r)\)  is closed by composition, then also $\Phi_{d-1}\in \D(x_0,r)$. 

This concludes the proof of Theorem \ref{deformationcube} provided we prove Lemma \ref{intermedio}.

The proof of Lemma \ref{intermedio} can be repeated verbatim as the proof of \cite[Lemma 3.10]{DavidSemmes}, if we replace \cite[Lemma 3.22]{DavidSemmes} with the following:
\begin{lemma}\label{finale}
Let $T$ be an $m$-dimensional closed cube with $m>d$ and define $F:=K\cap T$.

For every $z \in T\setminus F$, we define $\e_z:=d(z,F)>0$. We consider a map $\eta_{z,T}:T \to T$ satisfying the conditions:
$$\eta_{z,T}(x) \in \partial T, \qquad \eta_{z,T}(x)-x=c(x-z), \qquad c=c(x,z,T)>0, \quad \forall x \in T\setminus B_{z,\e_z}.$$
In $B_{z,\e_z}$ we define $\eta_{z,T}$ in order to get a Lipschitz map on $T$.

Then
\begin{equation}\label{stima}
\int_{z\in \left (\frac 12 T\right )\setminus F} \|(\eta_{z,T})_\#V\|(T)\, d\H^m(z) \leq C(diam (T))^m\|V\|(T),
\end{equation}
where $C$ depends just on $m$ and $d$.
\end{lemma}

\begin{proof}[Proof of Lemma \ref{finale}]
For a given point $z$, if we denote 
$$(\eta_{z,T})_\#V=\tilde \theta \H^d \res \eta_{z,T}(K) \otimes \delta_{T_x\eta_{z,T}(K)},$$ 
by \eqref{mass push} we compute
\begin{equation}\label{seconda}
\|(\eta_{z,T})_\#V\|(T)= \int_{K\cap T} \theta J_K\eta_{z,T} d\H^d.
\end{equation}
Moreover, for every $x \in T\setminus \overline{B_{z,\e_z}}$, we have
\begin{equation}\label{prima}
\begin{split}
J_K\eta_{z,T}(x,\pi)&\leq C|D\eta_{z,T}|^d \leq C\left (\lim_{y\to x}\frac{|\eta_{z,T}(x)-\eta_{z,T}(y)|}{|x-y|}\right )^d\\
&\leq C\left (\lim_{y\to x}\frac{|x-y|diam (T)}{|x-y|\cdot |x-z|}\right )^d\leq C\frac{(diam (T))^d}{|x-z|^d},\end{split}
\end{equation}
where $C$ depends just on $m$ and $d$.
Plugging \eqref{prima} in \eqref{seconda}, we infer that 
$$
\|(\eta_{z,T})_\#V\|(T)\leq  C(diam (T))^d\left \|\frac{1}{|\cdot-z|^d}V\right \|(T).
$$ 
Integrating this estimate over $\left (\frac 12 T\right )\setminus F$ and applying Fubini's theorem, we get
$$
\int_{z\in \left (\frac 12 T\right )\setminus F} \|(\eta_{z,T})_\#V\|(T)\, d\H^m(z) \leq C(diam (T))^d \int_{T} \left (\int_T \frac{1}{|x-z|^d} \, d\H^m(z)\right )\, d\|V\|(x).
$$
Since the integral in $z$ on the right hand side is finite because $m>d$ and its value is less or equal than $C(diam (T))^{m-d}$, we conclude the estimate \eqref{stima} as we wanted to prove. 
\end{proof}
Lemma \ref{finale} allows us to prove Lemma \ref{intermedio} as for \cite[Lemma 3.10]{DavidSemmes}. Our proof is now concluded.
\end{proof}

For further purposes, we recall two important results proven in \cite{DePDeRGhi2}. The first one was inspired by the ``Strong Constancy Lemma'' of Allard~\cite[Theorem 4]{Allard84BOOK}:

\begin{lemma}\label{t:integrality}\emph{(\cite[Lemma 3.2]{DePDeRGhi2})}
Let \(F_j:G(B_1)\to \R_{>0}\) be a sequence of \(C^1\) integrands and let   
$V_j\in \mathbf V_d(G(B_1))$ be a sequence of $d$-varifolds equi-compactly supported in $B_1$
 (i.e. such that \(\spt \|V_j\|\subset K \subset\subset B\)) with $\|V_j\|(B_1)\leq 1$.  
If there exist $N>0$ and  $S\in G(n,d)$ such that
\begin{itemize}
\item [(1)] $|\delta_{F_{j}} V_j |(B_1) +\|F_j\|_{C^1(G(B_1))}\leq N$,
\item [(2)] \(|B_{F_j}(x,T)-B_{F_j}(x,S)|\le \omega(|S-T|)\) for some modulus of continuity independent on \(j\),
\item [(3)] $\delta_j:=\int_{G(B_1)} |T-S|  dV_j(z,T) \rightarrow 0$ as \(j\to \infty\),
\end{itemize}
then, up to subsequences, there exists $\gamma \in L^1(B^d_1, \H^d\res B^d_1)$ such that for every $0<t<1$ 
\begin{equation}\label{eq:limiteforte}
\Big| (\Pi_S)_{\#}\big(F_j(z, S)\|V_j\|\big) - \gamma\mathcal \H^d\res B^d_1\Big|(B^d_t ) \longrightarrow 0, 
\end{equation}
where \(\Pi_S:\R^n\to S\) denotes   the orthogonal projection onto \(S\).
\end{lemma}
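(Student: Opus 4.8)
The plan is to reduce everything to a statement on the plane $S$: push the weighted mass measures forward by the orthogonal projection $\Pi_S:\R^n\to S$, use an algebraic identity for $B_{F_j}(\cdot,S)$ to transfer the bound on the anisotropic first variation of the $V_j$ into a bound on the distributional gradient of the projected measures, and then upgrade weak-$*$ convergence of these measures to total variation convergence. I would begin with compactness: after extracting a subsequence, $V_j\rightharpoonup V$ in $\mathbf V_d$ (so $\|V_j\|\rightharpoonup\|V\|$), $F_j\to F_\infty$ uniformly on $G(\overline B_1)$ by Arzel\`a--Ascoli (the uniform $C^1$-bound makes $\{F_j\}$ equi-Lipschitz), and $\mu_j:=(\Pi_S)_{\#}\big(F_j(\cdot,S)\,\|V_j\|\big)\rightharpoonup\mu$, a Radon measure on $S$ with $\mu(S)\le N$, supported in the fixed compact $K^d:=\Pi_S(K)\Subset B^d_1$. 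Hypothesis (3) forces the Grassmannian part of $V$ to localize, i.e.\ $V=\|V\|\otimes\delta_S$ and $\mu=(\Pi_S)_{\#}\big(F_\infty(\cdot,S)\,\|V\|\big)$.

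The algebraic input is that, reading off the first variation formula \eqref{eq:firstvariation}, $B_F(x,T)$ equals $F(x,T)\,\Pi_T$ plus a matrix living on the off-diagonal $T$--$T^\perp$ blocks, whence $\Pi_T B_F(x,T)\Pi_T=F(x,T)\,\Pi_T$. I would then test $\delta_{F_j}V_j$ against $g(x):=\zeta(x)\,\tilde g(\Pi_S x)$, with $\tilde g\in C^1_c(B^d_1;S)$ and $\zeta\in C^1_c(B_1)$ a fixed cutoff equal to $1$ on $K$: since $Dg=\Pi_S\,(D\tilde g\circ\Pi_S)\,\Pi_S$ on $K$, the identity gives $B_{F_j}(x,S):Dg(x)=F_j(x,S)\,(\mathrm{div}_S\tilde g)(\Pi_S x)$, so integrating against $V_j$, replacing $B_{F_j}(x,T)$ by $B_{F_j}(x,S)$ at the cost of an error $\le\e_j\|D\tilde g\|_\infty$ with $\e_j:=\int_{G(B_1)}\omega(|T-S|)\,dV_j\to0$ (by (2) and (3)), and using \eqref{eq:firstvariation}, (1) and $\|V_j\|(B_1)\le1$, I arrive at
\[
\Big|\int_{S}\mathrm{div}_S\tilde g\,d\mu_j\Big|\ \le\ 2N\,\|\tilde g\|_{C^0}+\e_j\,\|\tilde g\|_{C^1}\qquad\text{for all }\tilde g\in C^1_c(B^d_1;S).
\]
Passing to the limit gives $\big|\int_S\mathrm{div}_S\tilde g\,d\mu\big|\le 2N\|\tilde g\|_{C^0}$, so $\mu$ has measure-valued distributional gradient and hence $\mu=\gamma\,\H^d\res B^d_1$ with $\gamma\in BV_{\mathrm{loc}}(B^d_1)\cap L^1(B^d_1)$ — the function in the statement.

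The remaining step, which I expect to be the main obstacle, is to upgrade $\mu_j\rightharpoonup\gamma\H^d$ to $|\mu_j-\gamma\H^d\res B^d_1|(B^d_t)\to0$ for every $t<1$, i.e.\ to rule out both oscillation and concentration of the (a priori possibly singular) $\mu_j$. From the displayed bound, elementary duality gives a splitting $D\mu_j=\nu_j+\rho_j$ with $|\nu_j|(B^d_1)\le 2N$ and $\|\rho_j\|_{(C^1_c)^*}\le C\e_j\to0$, i.e.\ a uniform $BV$-type bound for the $\mu_j$ up to a vanishing $C^1$-dual error; and testing instead against radial tangential fields $\tilde g(z)=(z-y)\chi(|z-y|)$ — for which $\|D\tilde g\|_\infty$ is a dimensional constant — and running the usual monotonicity computation in $S\cong\R^d$ gives a uniform bound $\mu_j(B^d_{y,r})\le C_N r^d$ (again modulo a vanishing error), which controls concentration. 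With these, a mollification-plus-diagonal scheme — regularize $\mu_j$ at a scale $\delta$, exploit the gradient bound and Rellich compactness to pass to the limit in $j$ at fixed $\delta$, then let $\delta\to0$ using the absolute continuity of $\mu$ — should yield $\mu_j\to\gamma\H^d$ in total variation on $B^d_t$, which is \eqref{eq:limiteforte}. The genuinely delicate point throughout is that the tilt and modulus-of-continuity hypotheses only let one control quantities up to errors that vanish against the $C^1$-norm, never the $C^0$-norm; the whole argument must be arranged so these spurious terms do not accumulate — this is exactly what makes hypothesis (2) the right assumption and why one cannot merely quote the classical compactness for isotropic varifolds of bounded first variation.
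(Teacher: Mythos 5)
Note first that this paper does not prove Lemma \ref{t:integrality}: it is quoted verbatim from \cite[Lemma 3.2]{DePDeRGhi2}, whose proof hinges on (a version of) Allard's ``Strong Constancy Lemma'' \cite[Theorem 4]{Allard84BOOK}, as the paper itself points out. Your reduction reproduces the first half of that argument correctly: the block structure of $B_F$, i.e. $\Pi_S B_{F_j}(x,S)\Pi_S=F_j(x,S)\,\Pi_S$, tested on fields $g=\zeta\cdot(\tilde g\circ \Pi_S)$ tangent to $S$, together with hypotheses (1)--(3) to swap $B_{F_j}(x,T)$ for $B_{F_j}(x,S)$, gives exactly the decomposition of the tangential distributional gradient of $\mu_j:=(\Pi_S)_\#\big(F_j(\cdot,S)\|V_j\|\big)$ as a measure of mass $\le CN$ plus an error $\rho_j$ with $|\rho_j(\tilde g)|\le \e_j\|\tilde g\|_{C^1}$, $\e_j\to 0$; and the identification of the weak-$*$ limit as $\gamma\,\H^d\res B_1^d$ with $\gamma\in BV$ is also fine.

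The genuine gap is the final step, which is the analytic core of the lemma and which you state as a ``should yield'' rather than prove. Passing from the above decomposition to $|\mu_j-\gamma\H^d\res B_1^d|(B^d_t)\to 0$ is precisely the content of the Strong Constancy Lemma, and your mollification-plus-diagonal scheme does not deliver it: splitting $|\mu_j-\gamma\H^d|\le |\mu_j-\mu_j*\rho_\delta|+\|\mu_j*\rho_\delta-\gamma*\rho_\delta\|_{L^1}+\|\gamma*\rho_\delta-\gamma\|_{L^1}$, the second term is handled by weak convergence at fixed $\delta$ (this is all that Rellich compactness of the mollified sequence gives) and the third since $\gamma\in L^1$, but nothing in your sketch controls the first term uniformly in $j$. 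Since $D\mu_j$ is a bounded measure only up to an error small in the dual of $C^1$, the measures $\mu_j$ may a priori carry singular parts and oscillations at scales finer than $\delta$; estimating $|\mu_j-\mu_j*\rho_\delta|(B^d_t)$ by duality forces you to test $\rho_j$ against fields whose $C^1$ norm is not controlled by the $C^0$ bound available on total-variation test functions (for instance, writing $\phi-\rho_\delta*\phi=\Div\Psi$ with $\|\Psi\|_\infty\le C\delta\|\phi\|_\infty$, one is left with $|\rho_j(\Psi)|\le \e_j\|\Psi\|_{C^1}$, and $\|\nabla\Psi\|_\infty$ involves $\|\nabla\phi\|_\infty$, unbounded over the class of competitors $|\phi|\le 1$). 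This is exactly the difficulty you flag at the end, but flagging it does not resolve it: as written, your argument establishes the hypotheses of the Strong Constancy Lemma and then re-derives only weak-$*$ convergence, not \eqref{eq:limiteforte}. To complete the proof one must either invoke \cite[Theorem 4]{Allard84BOOK} (as \cite{DePDeRGhi2} essentially does) or supply a proof of that statement, which is the nontrivial missing ingredient.
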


The second result is the anisotropic counterpart of Allard's rectifiability theorem. We state a weaker version of this result which is sufficient for our purposes, but one may observe that it has a stronger formulation, see \cite[Theorem 1.2]{DePDeRGhi2}.
\begin{theorem}\label{thm:rect}
 Let $F\in C^1( G(\R^n),\R_{>0}) $ be a positive integrand satisfying the \emph{atomic condition} as in Definition \ref{atomica} at every \(x\in \R^n\).
Given \(V\in  \mathbf V_d\) and an open set $U\subset \R^n$ such that \(\delta_FV\res U\) is a Radon measure and $\Theta^d_*(x,V)>0$ for $\|V\|$-a.e. $x \in U$, then $V\res G(U)$ is a \(d\)-rectifiable varifold.
  \end{theorem}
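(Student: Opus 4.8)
The plan is to recover the weak form of the anisotropic Allard rectifiability theorem, \cite[Theorem~1.2]{DePDeRGhi2}, by combining a purely measure-theoretic rectifiability criterion with a blow-up analysis in which the atomic condition of Definition~\ref{atomica} plays the role that the monotonicity formula plays in the isotropic theory. Everything reduces to showing that $\|V\|\res U$ is a $d$-rectifiable measure: once this is known, the representation \eqref{rappresenta} for $V\res G(U)$ follows from a further, mild constancy argument, since $\delta_FV\res U$ being a Radon measure prevents the Grassmannian variable of $V$ from oscillating off the approximate tangent plane of the underlying set, $\|V\|$-a.e. To prove $d$-rectifiability of $\|V\|\res U$ I would invoke the Marstrand--Mattila rectifiability criterion (see \cite{DeLellisNOTES}): it suffices that $0<\Theta^d_*(x,V)\le\Theta^{d*}(x,V)<\infty$ for $\|V\|$-a.e.\ $x\in U$, and that for $\|V\|$-a.e.\ $x$ every tangent measure of $\|V\|$ at $x$ be of the form $c\,\H^d\res S$ with $S\in G(n,d)$ and $c>0$.

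A preliminary step is the two-sided density bound. The lower bound is among the hypotheses; the upper bound $\Theta^{d*}(x,V)<\infty$ for $\|V\|$-a.e.\ $x$ --- the anisotropic substitute for what the monotonicity formula gives for free in the area case --- requires a separate argument exploiting that $\delta_FV\res U$ is a Radon measure together with the bounds \eqref{cost per area}. Once it is available, differentiating $|\delta_FV|$ with respect to $\|V\|$ yields $|\delta_FV|(B_{x,r})=o(r^{d-1})$ at $\|V\|$-a.e.\ $x$ as well. Next I set up the blow-up: at a $\|V\|$-generic $x\in U$ consider the rescalings $V_{x,r}$ (recentred at the origin, dilated by $1/r$, normalised so that $\|V_{x,r}\|(B_1)\le1$); their masses on $B_1$ are uniformly controlled, and by the freezing identity \eqref{eq:firstvariationfroze} together with $|\delta_FV|(B_{x,r})=o(r^{d-1})$ one gets $|\delta_{F_r^x}V_{x,r}|(B_1)\to0$, where $F_r^x(z,T):=F(x+rz,T)\to F(x,T)=:\bar F(T)$. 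Hence every varifold subsequential limit $W$ of the $V_{x,r}$ is stationary for the translation-invariant integrand $\bar F$, with $0<\Theta^d_*(0,W)\le\Theta^{d*}(0,W)<\infty$.

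The core is then twofold. \textbf{(A)} For $\|V\|$-a.e.\ $x$ the Grassmannian parts of the $V_{x,r}$ concentrate on a single plane: there is $S_x\in G(n,d)$ with $\int_{G(B_1)}|T-S_x|\,dV_{x,r}(z,T)\to0$. Here the atomic condition enters, and this is the delicate point. One argues by contradiction: were the Grassmannian part not to concentrate, a blow-up limit $W$ would be $\bar F$-stationary with an averaged Grassmannian measure $\mu$ that is not a single Dirac; by Definition~\ref{atomica}(i)--(ii) one then has $\dim\ker A_x(\mu)\le n-d-1$, and feeding this --- through the first-variation representation \eqref{eq:firstvariation} and the bound \eqref{operatore} on $B_F$ --- into the stationarity of $W$ and the density bounds $0<\Theta^d_*(0,W)\le\Theta^{d*}(0,W)<\infty$ produces a contradiction. \textbf{(B)} Granting (A), the hypotheses of Lemma~\ref{t:integrality} hold for $V_j:=V_{x,r_j}$ --- (1) and (2) from the first-variation bound and $F\in C^1$, (3) being precisely (A) --- so \eqref{eq:limiteforte} gives $(\Pi_{S_x})_\#\big(\bar F(S_x)\|V_{x,r_j}\|\big)\to\gamma\,\H^d\res B_1^d$ in total variation; combined with $\Theta^d_*(x,V)>0$ this forces the tangent measure of $\|V\|$ at $x$ to be $c\,\H^d\res S_x$. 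Steps (A)--(B), the density bounds and the Marstrand--Mattila criterion then give that $\|V\|\res U$ is $d$-rectifiable, hence $V\res G(U)\in\mathbf R_d$.

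I expect step (A) to be the main obstacle. Lacking a monotonicity formula, the blow-up limit $W$ cannot be assumed conical, so one cannot just read the atomic condition off a single Grassmannian slice of $W$ and close the contradiction by scaling; the non-concentration has to be excluded by a quantitative scheme --- an excess-type decay for the Grassmannian variable across dyadic scales --- which is exactly where \cite{DePDeRGhi2} concentrates its technical effort and where the strong constancy Lemma~\ref{t:integrality} is the decisive tool. A secondary but genuine difficulty is the a priori finiteness of the upper density needed in the preliminary step, once more because the monotonicity argument that settles it instantly in the isotropic case is unavailable.
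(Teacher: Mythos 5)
The paper offers no proof of Theorem \ref{thm:rect} at all: it is stated explicitly as a weakened quotation of \cite[Theorem 1.2]{DePDeRGhi2}, so the only justification given (and intended) is the citation. Your proposal is therefore an attempt to reconstruct the proof of the cited result, and as a roadmap it points at the right toolbox: blow-up at a $\|V\|$-generic point selected by Besicovitch differentiation, the freezing identity \eqref{eq:firstvariationfroze}, the Strong Constancy Lemma \ref{t:integrality}, the atomic condition of Definition \ref{atomica} to force the Grassmannian part to concentrate on a single plane, and a Marstrand--Mattila-type criterion to conclude.

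As a proof, however, it has genuine gaps exactly where the real work lies, and you concede as much. Step (A) is asserted rather than proved: ``feeding $\dim\ker A_x(\mu)\le n-d-1$ into the stationarity of $W$ and the density bounds produces a contradiction'' is not an argument, and since blow-up limits are not conical here, no scaling shortcut is available; this is precisely the technical core of \cite{DePDeRGhi2} (a quantitative excess-decay scheme built on the constancy lemma together with the atomic condition applied to the limiting averaged matrix), and nothing in your outline substitutes for it. Likewise the a.e.\ finiteness of $\Theta^{d*}(x,V)$, which you need both for the Marstrand--Mattila criterion and to get $|\delta_FV|(B_{x,r})=o(r^{d-1})$ and mass-bounded blow-ups, is flagged as ``requiring a separate argument'' that is never supplied. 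Step (B) is also too quick: strong convergence of $(\Pi_{S_x})_\#\big(F(\cdot,S_x)\|V_{x,r_j}\|\big)$ controls only the projected measure, so to conclude that tangent measures are of the form $c\,\H^d\res S_x$ you must additionally exclude mass sitting off $S_x$ and show that $\gamma$ is constant (a constancy-type argument for the $\bar F$-stationary limit), and the final passage from rectifiability of $\|V\|\res U$ to the varifold representation \eqref{rappresenta} is not a ``mild'' remark but part of the statement being proved. In sum, at every decisive point the proposal defers to \cite{DePDeRGhi2}; that is a legitimate way to justify the theorem --- indeed it is what the paper itself does --- but then the proof is the citation, not the sketch, and the sketch on its own does not establish the result.
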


\section{An integrality theorem}
In this section, we prove an integrality theorem of independent interest, which is going to be applied in the proof of Theorem \ref{thm generale}.

\begin{theorem}\label{integrality}
Let $F\in C^1( G(\R^n),\R_{>0}) $ be a positive integrand satisfying the \emph{atomic condition} as in Definition \ref{atomica} at every \(x\in \R^n\). Given an open set $U\subseteq \R^n$ and a sequence of integral varifolds $(V_j)_{j\in \N}\subseteq \mathbf I_d$ converging to a varifold $V$. Assume that $V$ enjoys the density lower bound
\begin{equation}\label{densita}
\Theta^d_* (x, V ) > 0 \quad \mbox{ for $\|V\|$-a.e. $x \in U$}
\end{equation}
and that the sequence $(V_j)_{j \in \N}$ satisfies
\begin{equation}\label{variazione limitata}
\sup_{j \in \N} |\delta_FV_j|(W) <\infty, \qquad \forall \, W \subset \subset U;
\end{equation}
then $V\res G(U)\in \mathbf I_d$.
\end{theorem}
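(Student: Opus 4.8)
\emph{Step 1: rectifiability and reduction to a pointwise statement.}
For every $g\in C^1_c(\R^n,\R^n)$ the integrand $(x,T)\mapsto\langle d_xF(x,T),g(x)\rangle+B_F(x,T):Dg(x)$ lies in $C^0_c(G(\R^n))$, so the varifold convergence $V_j\to V$ gives $\delta_FV_j(g)\to\delta_FV(g)$; combined with \eqref{variazione limitata} this shows that $\delta_FV\res U$ is a Radon measure and that $\delta_FV_j\to\delta_FV$ weakly-$*$ as vector measures on each $W\subset\subset U$. Passing to a subsequence we may also assume $|\delta_FV_j|\to\nu$ weakly-$*$ for some Radon measure $\nu$ on $U$ (with $\nu\geq|\delta_FV|$). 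By Theorem~\ref{thm:rect}, the atomic condition together with \eqref{densita} yields $V\res G(U)=\theta\,\H^d\res M\otimes\delta_{T_xM}\in\mathbf R_d$, and it suffices to prove $\theta(x_0)\in\N$ for $\|V\|$-a.e.\ $x_0\in U$. Fix a \emph{good} point $x_0\in U$: one at which $S:=T_xM$ exists, $\Theta^d(x_0,V)=\theta(x_0)\in(0,\infty)$, $(\eta_{x_0,r})_\#(V\res G(U))\to\theta(x_0)\H^d\res S\otimes\delta_S$ as $r\to0$ (where $\eta_{x_0,r}(y)=(y-x_0)/r$), and $\limsup_{r\to0}r^{-d}\big(\|V\|(B_{x_0,r})+\nu(B_{x_0,r})\big)<\infty$ (the latter holds $\|V\|$-a.e.\ by Radon--Nikodym differentiation of $\nu$ with respect to $\|V\|$ and $\|V\|(B_{x_0,r})\sim\omega_d\theta(x_0)r^d$). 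Such points have full $\|V\|$-measure.

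\emph{Step 2: a diagonal sequence of blow-ups.}
Pick $r_k\downarrow0$ with $\|V\|(\partial B_{x_0,r_k})=\nu(\partial B_{x_0,r_k})=0$, so that $\|V\|(B_{x_0,r_k})\leq Cr_k^d$ and $\nu(B_{x_0,r_k})\leq Cr_k^d$. For fixed $k$, $\eta_{x_0,r_k}$ being a diffeomorphism, $(\eta_{x_0,r_k})_\#V_j\to(\eta_{x_0,r_k})_\#V$ weakly-$*$ as $j\to\infty$, while $\limsup_j|\delta_FV_j|(B_{x_0,r_k})\leq\nu(\overline{B_{x_0,r_k}})\leq Cr_k^d$ and $\|V_j\|(B_{x_0,r_k})\to\|V\|(B_{x_0,r_k})$; hence we can choose $j_k\uparrow\infty$ with $(\eta_{x_0,r_k})_\#V_{j_k}\to\theta(x_0)\H^d\res S\otimes\delta_S$, $|\delta_FV_{j_k}|(B_{x_0,r_k})\leq 2Cr_k^d$ and $\|V_{j_k}\|(B_{x_0,r_k})\leq 2Cr_k^d$. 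Using the coarea formula in the radial variable, fix $\rho_k\in(\tfrac14,\tfrac12)$ with $\H^{d-1}(M_{j_k}\cap\partial B_{x_0,\rho_kr_k})<\infty$ and $\int_{M_{j_k}\cap\partial B_{x_0,\rho_kr_k}}\theta_{j_k}\,d\H^{d-1}\leq C' r_k^{d-1}$. Set $\tilde F_k(z,T):=F(x_0+r_kz,T)$ and $\tilde V_k:=\tfrac1{N_0}(\eta_{x_0,r_k})_\#\big(V_{j_k}\res G(\overline{B_{x_0,\rho_kr_k}})\big)$, where $N_0\in\N$ is chosen with $\|\tilde V_k\|(B_1)\leq1$; then $\spt\|\tilde V_k\|\subseteq\overline{B_{1/2}}\subset\subset B_1$. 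Combining \eqref{eq:firstvariation}, \eqref{eq:firstvariationfroze} and the change of variables for the push-forward, and controlling by a standard slicing argument the boundary term produced by the restriction, one gets
\[
|\delta_{\tilde F_k}\tilde V_k|(B_1)\ \leq\ C\,r_k^{1-d}\Big(|\delta_FV_{j_k}|(B_{x_0,r_k})+\textstyle\int_{M_{j_k}\cap\partial B_{x_0,\rho_kr_k}}\theta_{j_k}\,d\H^{d-1}\Big)\ =\ O(r_k)+O(1),
\]
which stays bounded; also $\|\tilde F_k\|_{C^1(G(B_1))}\leq C(F)$, and $|B_{\tilde F_k}(z,T)-B_{\tilde F_k}(z,S)|=|B_F(x_0+r_kz,T)-B_F(x_0+r_kz,S)|\leq\omega(|T-S|)$ for a fixed modulus $\omega$ (uniform continuity of $B_F$ on a compact set). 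Finally $\int_{G(B_1)}|T-S|\,d\tilde V_k\to0$, because $(\eta_{x_0,r_k})_\#V_{j_k}\to\theta(x_0)\H^d\res S\otimes\delta_S$. Thus all three hypotheses of Lemma~\ref{t:integrality} hold for $(\tilde F_k,\tilde V_k)$ with the plane $S$.

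\emph{Step 3: conclusion via Lemma~\ref{t:integrality}.}
The lemma produces, along a subsequence, $\gamma\in L^1(B^d_1)$ with $\big|(\Pi_S)_\#(\tilde F_k(\cdot,S)\|\tilde V_k\|)-\gamma\,\H^d\res B^d_1\big|(B^d_t)\to0$ for every $t\in(0,1)$. On $B_{1/4}$ one has $\tilde V_k=\tfrac1{N_0}(\eta_{x_0,r_k})_\#V_{j_k}$ (since $\rho_k>\tfrac14$), hence $\|\tilde V_k\|\to\tfrac1{N_0}\theta(x_0)\H^d\res S$ there; together with $\tilde F_k(\cdot,S)\to F(x_0,S)$ uniformly and with push-forward by $\Pi_S$ (on the compact set $\overline{B_{1/2}}$), this identifies $\gamma\equiv\tfrac1{N_0}F(x_0,S)\theta(x_0)$ a.e.\ on $B^d_{1/4}$. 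Now write $\tilde V_k=\tilde\theta_k\,\H^d\res\tilde M_k\otimes\delta_{T_x\tilde M_k}$ with $\tilde\theta_k\in\tfrac1{N_0}\N$, put $\tilde M_k^0:=\{x\in\tilde M_k:\ J_{\tilde M_k}\Pi_S(x)\geq\tfrac12\}$, and define
\[
n_k(z):=N_0\!\!\!\sum_{x\in\tilde M_k^0\cap\Pi_S^{-1}(z)}\!\!\!\tilde\theta_k(x)\ =\!\!\!\sum_{x\in\tilde M_k^0\cap\Pi_S^{-1}(z)}\!\!\!\theta_{j_k}(\eta_{x_0,r_k}^{-1}x)\ \in\ \{0\}\cup\N\qquad\text{for a.e. }z .
\]
Since $\{J_{\tilde M_k}\Pi_S<\tfrac12\}\subseteq\{|T-S|\geq c_0\}$ for some $c_0>0$, the $\|\tilde V_k\|$-mass carried on $\tilde M_k\setminus\tilde M_k^0$ is $\leq c_0^{-1}\int|T-S|\,d\tilde V_k\to0$; applying the coarea formula on $\tilde M_k^0$, and using $\tilde F_k(\cdot,S)\to F(x_0,S)$ uniformly together with $\int(1-J_{\tilde M_k}\Pi_S)\,d\|\tilde V_k\|\to0$, one checks that the $\H^d\res B^d_1$-density of $(\Pi_S)_\#(\tilde F_k(\cdot,S)\|\tilde V_k\|)$ differs from $\tfrac1{N_0}F(x_0,S)\,n_k$ by a quantity tending to $0$ in $L^1(B^d_t)$. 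Hence $\tfrac1{N_0}F(x_0,S)\,n_k\to\gamma$ in $L^1(B^d_t)$, i.e.\ (recall $F(x_0,S)\geq\lambda>0$) $n_k\to\theta(x_0)$ in $L^1(B^d_t)$; since each $n_k$ is nonnegative-integer valued, a further subsequence converges a.e., so $\theta(x_0)$ is a nonnegative integer, and $\theta(x_0)>0$ forces $\theta(x_0)\in\N$. As $x_0$ ranges over a set of full $\|V\|$-measure, $\theta$ is $\N$-valued $\H^d\res M$-a.e., i.e.\ $V\res G(U)\in\mathbf I_d$.

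\emph{Main obstacle.}
The delicate point is Step~2: showing that the rescaled anisotropic first variations $|\delta_{\tilde F_k}\tilde V_k|(B_1)$ stay uniformly bounded. Here \eqref{variazione limitata} alone does not suffice — it only gives a bound that does not decay in the radius — and one must combine it with the fact that, at a good point, $|\delta_FV|(B_{x_0,r})=O(r^d)$ (equivalently, the density of the limit $\nu$ of $|\delta_FV_j|$ with respect to $\|V\|$ is finite $\|V\|$-a.e.), so that after the $r_k^{-d}$ scaling the interior part of the first variation actually tends to $0$. The remaining technicalities — the boundary term created by the restriction $V_{j_k}\res G(\overline{B_{x_0,\rho_kr_k}})$, for which the coarea choice of $\rho_k$ is made, and the weak-$*$/total-variation bookkeeping identifying $\gamma$ and matching densities in Step~3 — are routine.
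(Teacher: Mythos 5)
Your proposal is correct and follows essentially the same strategy as the paper: rectifiability via Theorem \ref{thm:rect}, Besicovitch differentiation of the weak-$*$ limit $\nu$ of $|\delta_F V_j|$ against $\|V\|$ to obtain the decaying bound $|\delta_F V_{j_k}|(B_{x_0,r_k})=O(r_k^d)$ at a good point, a diagonal blow-up sequence, and Lemma \ref{t:integrality} to identify $\Theta^d(x_0,V)$ as an a.e.\ limit of integer-valued projected densities. The only deviations are technical: you localize by restricting to a ball whose radius is chosen by slicing (the paper instead multiplies the blow-ups by a smooth cutoff $\chi$, which avoids the boundary term entirely), and your final bookkeeping with the counting function over the part of $\tilde M_k$ where the projection Jacobian is close to $1$ is a slightly more careful rendering of the paper's identification of the projected measure's density with an integer-valued function.
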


\begin{proof}

By the assumption \eqref{variazione limitata} and by the lower semicontinuity of the total variation of the anisotropic first variation with respect to varifolds convergence, we get that $\delta_FV\res U$ is a Radon measure. Moreover, $V$ enjoys the density lower bound \eqref{densita}. Since $F$ satisfies the atomic condition as in Definition \ref{atomica} at every \(x\in \R^n\), we are in the hypotheses to apply Theorem \ref{thm:rect} and to conclude that $V$ is a $d$-rectifiable varifold.

We now prove that the limiting varifold $V$ is integral.

Since $V \res G(U)\in \mathbf R_d$, it can be represented as
$$V\res G(U):=\Theta (\cdot,V)\H^d\res K\otimes \delta_{T_xK},$$
 where $K$ is a $d$-rectifiable set, $\Theta (\cdot,V) \in L^1(\R^n;\H^d)$ and $T_xK$ denotes the tangent space of $K$ at $x$.

By assumption \eqref{variazione limitata}, we know that there exists $\nu \in \mathcal M_+(U)$ such that $|\delta_FV_j|$ converges weakly in the sense of measures to $\nu$ in $U$.  By Besicovitch differentiation theorem (see \cite[Theorem 2.22]{AFP}) we get that for $\|V\|$- a.e. point $x$ in $U$
\begin{equation}\label{bes}
\limsup_{r\to 0}\frac{\nu (B_{x,r})}{\|V\|(B_{x,r})}=C_{\bar x} < + \infty.
\end{equation}

We fix a point $\bar x \in U$ such that $\Theta (\bar x,V)$ and $T_{\bar x}K$ exist, $\Theta (\bar x,V)\in (0,+\infty)$ (this is true at $\|V\|$- a.e. point in $U$ by the rectifiability of $V \res G(U)$) and such that \eqref{bes} holds. Assume w.l.o.g. that $T_{\bar x}K=\R^d \times \{0\}^{n-d}$; we denote $S:=T_{\bar x}K$ and with \(\Pi_S:\R^n\to S\) and \(\Pi_{S^\perp}:\R^n\to S^\perp\) the orthogonal projections respectively onto \(S\) and $S^\perp$.

There exists a sequence of radii $(r_k)_{k \in \N}\downarrow 0$ such that $\nu(\partial B_{\bar x,r_k})=0$ and consequently there exists $j_k:=j(r_k)$ big enough so that
\begin{equation}\label{ow1}
|\delta_FV_{j_k}|(B_{\bar x,r_k})=(1+o_{r_k}(1))\nu(B_{\bar x,r_k}).
\end{equation}
Combining \eqref{bes} and \eqref{ow1}, we obtain
$$
\limsup_{k\to \infty}\frac{|\delta_F V_{j_k}|(B_{\bar x,r_k})}{\|V\|(B_{\bar x,r_k})}=\limsup_{k\to \infty}\frac{\nu(B_{\bar x,r_k})}{\|V\|(B_{\bar x,r_k})}=C_{\bar x} < + \infty,
$$
and for $k$ big enough we conclude
\begin{equation}\label{ow2}
|\delta_F V_{j_k}|(B_{\bar x,r_k})\leq 2 C_{\bar x}\|V\|(B_{\bar x,r_k}).
\end{equation}
For every $k \in \N$, we consider the rescaling transformation $\eta^{\bar x,r_k}:\R^n\rightarrow \R^n$, $\eta^{\bar x,r_k}(y)= \frac{y-\bar x}{r_k}$. We define
$$ 
V^{k}:=\left (\eta^{\bar x,r_k}_\#V \right ) \qquad \mbox{and} \qquad V_j^{k}:=\left (\eta^{\bar x,r_k}_\#V_j \right ).
$$ 
Since $V_j\rightharpoonup V$, for every $k \in \N$ 
\begin{equation}\label{conv}
\mbox{$V_j^{k}\rightharpoonup V^{k}$ \qquad  as \qquad $j \to \infty$.}
\end{equation} 
But, since $\Theta (\bar x,V)<+\infty$, we get that $V^{k}$ are locally bounded uniformly with respect to $k$ and we infer 
$$
V^{k}\rightharpoonup \Theta (\bar x,V)\H^d\res S \otimes \delta_{S}, \qquad \mbox{as } k\to \infty.
$$
Via a diagonal argument, up to extract another (not relabeled) subsequence $j_k$, if we define $\tilde V^k:=V_{j_k}^{k}$, we get
\begin{equation}\label{eq:ok}
\|V_{j_k}\|(B_{\bar x,r_k})\leq 2\|V\|(B_{\bar x,r_k})\leq 4\Theta (\bar x,V)r_k^d,  \qquad \|\tilde V^k\|(B_1^d\times B_1^{n-d} \setminus B_1^d\times B_{\frac 12}^{n-d})=o_{r_k}(1),
\end{equation}
\begin{equation}\label{eq:ok22}
\|\tilde V^k\|(B_1^d\times B_1^{n-d} )\leq 2\Theta (\bar x,V),
\end{equation}
and the convergence
\begin{equation}\label{eq:ok1}
\tilde V^k\rightharpoonup \Theta (\bar x,V)\H^d\res S \otimes \delta_{S}, \qquad \mbox{as } k\to \infty.
\end{equation}
 We consider $\chi_1 \in C_c^\infty(B_{\sqrt 2/2}^d)$ with $\chi_1\equiv 1$ in $B_{\frac 12}^d$, $\chi_2 \in C_c^\infty(B_{\sqrt 2/2}^{n-d})$ with $\chi_2\equiv 1$ in $B_{1/2}^{n-d}$ and we define $\chi\in C_c^\infty(B_1)$ as $\chi(x):=\chi_1(\Pi_S (x))\chi_2(\Pi_{S^\perp}(x))$.
 
 We denote \(F_k(z,T)=F(\bar x+r_k z,T)\) and define the family of varifolds $W_k:= \chi \tilde V^k$ equicompactly supported in $B_1$. We claim that
\begin{equation}\label{supbound}
\sup_{k\in \N} | \delta_{F_k} W_k|(B_1)<+\infty.
\end{equation}
 Indeed, we define $\chi_k:=\chi \circ \eta^{\bar x,r_k}\in C^\infty_c(B_{\bar x,r_k})$ and for every $\varphi \in C^\infty_c(B_1,\R^n)$ we consider the map $\varphi_k:=\varphi \circ \eta^{\bar x,r_k}\in C^\infty_c(B_{\bar x,r_k},\R^n)$, so that 
 \begin{equation}\label{stupidcomp}
\|\chi_k\|_\infty\leq \|\chi\|_\infty\leq 1, \qquad r_k\|\nabla \chi_k\|_\infty \leq \|\nabla \chi\|_\infty \qquad \mbox{and} \qquad \|\varphi_k\|_\infty\leq \|\varphi\|_\infty.
\end{equation}
  Thanks to equations \eqref{operatore}, \eqref{eq:firstvariationfroze}, \eqref{ow2}, \eqref{eq:ok} and \eqref{stupidcomp}, we can compute
\begin{equation*}
\begin{split}
|\delta_{F_k}W_k(\varphi)| &\, \, \quad=| \delta_{F_k}(\chi \tilde V^k)(\varphi)|\\
&\, \, \quad=\Big| \int  \big\langle d_z F_k(z, T), \chi(z) \varphi(z)\rangle d \tilde V^k(z,T)\\
&\, \, \quad\quad+ \int B_{F_k} (z,T):D\varphi(z)\, \chi(z) d\tilde V^k(z,T)\Big|	\\
&\, \, \quad=\Big| \int  \big\langle d_z F_r(\eta^{\bar x,r_k}(y), T), \chi(\eta^{\bar x,r_k}(y)) \varphi(\eta^{\bar x,r_k}(y))\rangle J\eta^{\bar x,r_k}(y,T) d V_{j_k}(y,T)\\
&\, \, \quad \quad+ \int B_{F_k} (\eta^{\bar x,r_k}(y),T):D\varphi(\eta^{\bar x,r_k}(y))\, \chi(\eta^{\bar x,r_k}(y))J\eta^{\bar x,r_k}(y,T) d V_{j_k}(y,T)\Big|	\\
&\quad \overset{\eqref{eq:firstvariationfroze}}{=}\Big| r_k^{1-d}\int  \big\langle d_y F(y, T), \chi_k(y) \varphi_k(y)\rangle  d V_{j_k}(y,T)\\
&\, \, \quad \quad+ r_k^{1-d}\int B_{F} (y,T):D\varphi_k(y)\, \chi_k(y) d V_{j_k}(y,T)\Big|	\\
&\, \quad =\Big| r_k^{1-d}\int  \big\langle d_y F(y, T), \chi_k(y) \varphi_k(y)\rangle d V_{j_k}(y,T)\\
&\, \, \quad \quad+ r_k^{1-d}\int B_{F} (y,T):D(\varphi_k\chi_k)(y)\, d V_{j_k}(y,T)	\\
&\quad \, \quad- r_k^{1-d}\int B_{F} (y,T):\nabla \chi_k(y) \otimes \varphi_k(y)\, d V_{j_k}(y,T)\Big|\\
&\quad \overset{\eqref{operatore}}{\leq} r_k^{1-d}|\delta_{F} V_{j_k}(\chi_k\varphi_k)|+ r_k^{1-d}\|F\|_{C^1(B_{\bar x,r_k})}\|V_{j_k}\|(B_{\bar x,r_k})\|\nabla \chi_k\|_\infty  \|\varphi_k\|_\infty     \\
&\overset{\eqref{eq:ok},\eqref{stupidcomp}}{\leq}    r_k^{1-d}|\delta_{F} V_{j_k}|(B_{\bar x,r_k})\|\varphi\|_\infty+4\|F\|_{C^1(B_{\bar x,r_k})} \Theta (\bar x,V)\|\nabla \chi\|_\infty  \|\varphi\|_\infty\\
&\quad \overset{\eqref{ow2}}{\leq}   2 r_k^{1-d}C_{\bar x}\|V\|(B_{\bar x,r_k})\|\varphi\|_\infty+4\|F\|_{C^1(B_{\bar x,r_k})} \Theta (\bar x,V)\|\nabla \chi\|_\infty  \|\varphi\|_\infty\\
&\quad \overset{\eqref{eq:ok}}{\leq} \big[4r_kC_{\bar x}\Theta (\bar x,V)+4\|F\|_{C^1(B_{\bar x,r_k})} \Theta (\bar x,V)\|\nabla \chi\|_\infty \big] \|\varphi\|_\infty,  
\end{split}
\end{equation*}
which implies~\eqref{supbound}. Finally, by~\eqref{eq:ok1},
\begin{equation*}\label{eq:conv}
\begin{split}
\lim_{k\to \infty}\int_{G(B_1)} |T-S| dW_k(z,T)&=\lim_{k\to \infty} \int_{G(B_1)}|T-S| \chi(z)  d\tilde V^k(z,T)\\
 &= \int_{G(B_1)}|T-S|\Theta (\bar x,V) \chi(z) d\delta_{S}(T)  d\H^d\res S(z)=0.
 \end{split}
\end{equation*}
Hence the sequence  \((W_k)_{k\in \N}\) satisfies the assumptions of Lemma~\ref{t:integrality}, indeed we observe that \(B_{F_k}(z,T)=B_{F}(\bar x+r_kz,T)\), so that assumption (2) in Lemma~\ref{t:integrality} is satisfied.
Thus we deduce that there exists $\gamma \in L^1(\H^d\res B^d_1)$ such that, along a (not relabeled) subsequence, for every \(0<t<1\)
\begin{equation}\label{eq:contradiction}
\Big| (\Pi_S)_{\#}(F(\bar x +r_k(\cdot),S)\|W_k\|)- \gamma \mathcal H^d\res B_1^d\Big| (B^d_t )\longrightarrow 0.
\end{equation}
Since $\chi_1\equiv 1$ in $B_{1/2}^d$, thanks to \eqref{eq:ok} we get
\begin{equation}
\begin{split}
(\Pi_S)_{\#}(F(\bar x +r_k(\cdot),S)&\|W_k\|)(B_{1/2}^d )\\
&=(\Pi_S)_{\#}(F(\bar x +r_k(\cdot),S)\|(\chi_2\circ \Pi_{S^\perp}) \tilde V^k\|)(B_{1/2}^d )\\
&= (\Pi_S)_{\#}(F(\bar x +r_k(\cdot),S)\|\tilde V^k\res (B_1^d\times B_1^{n-d})\|)(B_{1/2}^d ) - o_{r_k}(1),
\end{split}
\end{equation}
which we plug in \eqref{eq:contradiction} to obtain
\begin{equation}\label{eq:limiteforte1}
\Big| (\Pi_S)_{\#}(F(\bar x +r_k(\cdot),S)\|\tilde V^k\res B_1^d\times B_1^{n-d}\|)- \gamma \mathcal H^d\res B^d_1\Big| (B_{1/2}^d )\longrightarrow 0. 
\end{equation}
But, thanks to \eqref{eq:ok22} 
\begin{equation}\label{eq:limiteforte2}
\begin{split}
\Big| (\Pi_S)_{\#}&(F(\bar x +r_k(\cdot),S)\|\tilde V^k\res B_1^d\times B_1^{n-d}\|)- (\Pi_S)_{\#}(F(\bar x ,S)\|\tilde V^k\res B_1^d\times B_1^{n-d}\|)\Big| (B_{1/2}^d )\\
&\, \, =\Big| (\Pi_S)_{\#}(F(\bar x +r_k(\cdot),S)\|\tilde V^k\res B_1^d\times B_1^{n-d}\|- F(\bar x ,S)\|\tilde V^k\res B_1^d\times B_1^{n-d}\|)\Big| (B_{1/2}^d )\\
&\, \,\leq \Big| (F(\bar x +r_k(\cdot),S)-F(\bar x ,S))\|\tilde V^k\res B_1^d\times B_1^{n-d}\|\Big| (B_{1/2}^d\times B_1^{n-d})\\
&\, \,\leq \sup_{z\in B_{\bar x,2}}\Big| (F(\bar x +r_kz,S)-F(\bar x ,S))\Big|\|\tilde V^k\| (B_1^d\times B_1^{n-d})\\
&\overset{\eqref{eq:ok22}}{\leq} 2\Theta (\bar x,V)\|F\|_{C^1(B_{\bar x,2})}r_k\longrightarrow 0.
\end{split}
\end{equation}
Plugging \eqref{eq:limiteforte2} in \eqref{eq:limiteforte1}, we conclude by triangular inequality that
\begin{equation}\label{eq:limiteforte3}
\Big| (\Pi_S)_{\#}(F(\bar x,S)\|\tilde V^k\res B_1^d\times B_1^{n-d}\|)- \gamma \mathcal H^d\res B^d_1\Big| (B_{1/2}^d )\longrightarrow 0. 
\end{equation}
Since $\tilde V^k\res B_1^d\times B_1^{n-d}$ is an integral varifold, then $(\Pi_S)_{\#}\|\tilde V^k\res B_1^d\times B_1^{n-d}\|$ is a $d$-rectifiable measure in $\R^d \approx S$ with integer $d$-density $\theta_k(\cdot)\in L^1(B_1^d;\N;\mathcal L^d)$. 
By \eqref{eq:limiteforte3}, we deduce that 
$$F(\bar x,S) \theta_k(\cdot)\longrightarrow \gamma(\cdot) \qquad \mbox{in }L^1(B_{1/2}^d;\mathcal L^d)$$
 and consequently, up to subsequences, $(\theta_k(x))_k\subset \N$ converges for $\H^d$-a.e. $x \in B_{1/2}^d$ to $\frac{\gamma(x)}{F(\bar x,S)} \in \N$.  
By \eqref{eq:ok1}, we also know that
$$(\Pi_S)_{\#}\|\tilde V^k\res B_1^d\times B_1^{n-d}\| \rightharpoonup \Theta (\bar x,V) \mathcal L^d\res B_1^d
$$
and by uniqueness of the limit, we infer that $\frac{\gamma(\cdot)}{F(\bar x,S)} \equiv \Theta (\bar x,V)$ in $B_{1/2}^d$. 
But $\frac{\gamma(\cdot)}{F(\bar x,S)}$ is integer valued in $B_{1/2}^d$, so we conclude that $\Theta (\bar x,V) \in \N$ and that $V$ is an integral varifold.
\end{proof}

\begin{remark}
We recall that the isotropic version of Theorem \ref{integrality} above has been proved in \cite[Section 6.4]{Allard}, without the density assumption \eqref{densita}, which is a consequence of the monotonicity formula in the isotropic setting. If we were able to preserve in the limit varifold $V$ the lower density bound of the sequence $V_j$ of Theorem \ref{integrality}, we would get the full anisotropic counterpart of the compactness for integral varifolds and in particular an alternative proof of it in the isotropic setting. 
\end{remark}

\section{Proof of Theorem \ref{thm generale}}\label{mainresult}
Up to extracting subsequences, we can assume the existence of $V\in \mathbf V_d$ such that
\begin{equation}\label{muj va a mu}
  V_j \overset{*}{\rightharpoonup} V .
\end{equation}
We remark that condition $(a)$ of Theorem \ref{thm generale} is automatically satisfied by the lower semicontinuity of the functional $\FF(\cdot)$ with respect to varifolds convergence.
This implies straightforwardly also condition $(b)$.
For the remaining properties, we  divide the argument in several steps.

\subsection{Proof of Theorem \ref{thm generale}: stationarity of the limiting varifold}
In this section we prove condition $(c)$. 

Assume by contradiction that there exists a smooth vector field $\psi$  compactly supported in $\R^n\setminus H$ such that
$$\delta_F V(\psi)\leq -2C <0.$$
There exists a map $\varphi:t\in \R \mapsto \varphi_t\in C^\infty (\R^n;\R^n)$ solving the following ODE:
\begin{equation*}
\begin{cases}\frac{\partial \varphi_t(x)}{\partial t}=\psi(\varphi_t(x))&\quad \forall x\in \R^n,\\
\varphi_0(x)=x &\quad \forall x\in \R^n.
\end{cases}
\end{equation*}
Notice that one can choose an $\e>0$ small enough to have that $\varphi_t$ is actually a diffeomorphism of $\R^n\setminus H$ into itself for every $t\in [0,\e]$.
We set
$$ V_t:=(\varphi_t)_\# V, \qquad \mbox{and } V^j_t=(\varphi_t)_\# V_j.$$
By continuity of the functional $\delta_F Z(\psi)$ with respect to $Z$, up to take a smaller $\e>0$, we get that
$$\delta_F V_t(\psi)\leq -C <0, \quad \forall t\in [0, \e].$$
Integrating the last inequality, we conclude that
\begin{equation}
\label{integro1}\FF(V_\e)\leq \FF(V) - C\e.
\end{equation}
We fix an $R>0$ big enough so that  $\psi$ is supported in $B_R$ and fix $\alpha \in (1,2)$ such that 
\begin{equation}\label{bordo}
\|V_\e\|(\partial B_{\alpha R})=0, \quad \mbox{ and consequently }\quad \FF(V_\e,\partial B_{\alpha R})=0.
\end{equation}
We notice that equation \eqref{bordo} can be read as
\begin{equation}\label{bordo2}
\FF(V,\partial B_{\alpha R})=0, \quad \mbox{ because \quad $\varphi_\e=\Id$ in $B_R^c$}.
\end{equation}
Since $V^j_\e \rightharpoonup  V_\e$ and $V^j \rightharpoonup  V$, thanks to the equalities \eqref{bordo} and \eqref{bordo2}, one can infer 
\begin{equation}\label{limite}
\FF(V_\e,\overline{B_{\alpha R}})=\lim_j \FF(V^j_\e,\overline{B_{\alpha R}}), \quad \mbox{ and }\quad \FF(V,\overline{B_{\alpha R}})=\lim_j \FF(V^j,\overline{B_{\alpha R}}).
\end{equation}
Moreover, from \eqref{integro1} and the fact that $\varphi_\e=\Id$ in $B_R^c$, we also get
\begin{equation}
\label{integro2}\FF(V_\e,\overline{B_{\alpha R}})\leq \FF(V,\overline{B_{\alpha R}}) - C\e.
\end{equation}

Using \eqref{limite} and \eqref{integro2}, we infer
\begin{equation}\label{liminf}
\begin{split}
\liminf_j \FF(V^j_\e) &\, \,  =  \liminf_j (\FF(V^j_\e,\overline{B_{\alpha R}})+ \FF(V^j_\e,\overline{B_{\alpha R}}^c))\\
&\, \, \leq \limsup_j \FF(V^j_\e,\overline{B_{\alpha R}})+ \liminf_j\FF(V^j_\e,\overline{B_{\alpha R}}^c) \\
&\overset{\eqref{limite}}{=} \FF(V_\e,\overline{B_{\alpha R}})+ \liminf_j\FF(V^j_\e,\overline{B_{\alpha R}}^c) \\
&\overset{\eqref{integro2}}{\leq} \FF(V,\overline{B_{\alpha R}})- C\e+ \liminf_j\FF(V^j_\e,\overline{B_{\alpha R}}^c) \\
&\overset{\eqref{limite}}{=} \lim_j\FF(V^j,\overline{B_{\alpha R}})- C\e+ \liminf_j\FF(V^j_\e,\overline{B_{\alpha R}}^c) \\
&\, \, \leq \liminf_j\FF(V^j)- C\e.
\end{split}
\end{equation}
By definition of good class, see Definition \ref{def good class}, there exists a new sequence $(\tilde V_j)_{j\in \N}\subseteq \P(H)$, such that
$$\FF(\tilde V_j)\leq \FF(V^j_\e) + \frac{C\e}{4},$$
and passing to the lower limit on $j$, we get
$$\liminf_j \FF(\tilde V_j)\leq \liminf_j\FF(V^j)- \frac{3C\e}{4},$$
which contradicts the minimality of the sequence $(V_j)_{j\in \N}$.

\subsection{Proof of Theorem \ref{thm generale}: lower density estimates} 
In this section we show that if there exists $\delta>0$ such that
$$\Theta^d(x,V_j)\geq \delta, \qquad \mbox{for } \|V_j\|\mbox{-a.e. } x\in \R^n\setminus H, \quad \forall j\in \N,$$
then there exist $\theta_0=\theta_0(n,d,\delta,\lambda,\Lambda)>0$ such that
\begin{equation}
  \label{lower density estimate mu ball}
 \|V\|(B_{x,r})\ge\theta_0\,\omega_d r^d\,,\qquad \textrm{ \(x\in\spt\,\|V\|\) and \( r<d_x :=\dist(x,H)\)}.
\end{equation}
To this end, by \eqref{cost per area}, it is  sufficient to prove the existence of $\beta=\beta(n,d,\delta,\lambda,\Lambda)>0$ such that
\begin{equation*}
  \FF(V,Q_{x,l})\ge\beta\, l^d\,,\qquad \textrm{ \(x\in\spt\,\|V\|\) and \( l<2d_x/\sqrt{n}\)}\, .
\end{equation*}
Let us  assume by contradiction that there exist $x\in\spt\,\|V\|$ and $l<2d_x/\sqrt{n}$ such that 
\begin{equation*}
 \frac{\FF(V,Q_{x,l})^\frac{1}{d}}{l}<\b.
 \end{equation*}
We claim that this assumption, for $\beta$ chosen sufficiently small depending only on $n,d,\delta,\lambda$ and $\Lambda$, implies that for some $l_\infty \in (0,l)$
\begin{equation}
 \label{absurd lower bound}
   \FF(V,Q_{x,l_\infty})=0,
\end{equation}
which is a contradiction with $x \in \spt\,\|V\|$.
In order to prove \eqref{absurd lower bound}, we assume that $\FF(V,\pa Q_{x,l})=0$, which is true for a.e. $l\in \R_{>0}$. 

To prove \eqref{absurd lower bound}, we construct a sequence of nested cubes $Q_i := Q_{x,l_i}$ such that, if $\beta$ is sufficiently small, the following holds:
\begin{itemize}
 \item[(i)] $Q_0 = Q_{x,l}$;
 \item[(ii)] $\FF(V,\pa Q_i)=0$;
 \item[(iii)] setting  $m_i:=\FF(V,Q_i)$ then:
 $$
 \frac{m_i^{\frac{1}{d}}}{l_i}<\beta;
 $$
 \item[(iv)] $m_{i+1}\leq(1-\frac{1}{k_2})m_i$, where $k_2:=\frac{\Lambda k_1}{\lambda}$ and $k_1$ is the constant in Theorem \ref{deformationcube};
 \item[(v)] $(1-4\e_i)l_i\geq l_{i+1}\geq (1-6\e_i)l_i$, where 
 \begin{equation}\label{eq:epsi}
 \e_i:=\frac{1}{k\beta}\frac{m_i^{\frac{1}{d}}}{l_i}
 \end{equation}  
and  \(k=\max\{6,6 / (1-(\frac{k_2-1}{k_2})^\frac{1}{d})\}\) is a universal  constant. 
 \item [(vi)]   $\lim_i m_i =0$ and $\lim_i l_i >0$.
 \end{itemize}
Following \cite{DavidSemmes}, we are going to construct the sequence of cubes by induction: the cube $Q_0$ satisfies by construction hypotheses (i)-(iii). Suppose that cubes until step $i$ are already defined. 

Setting  $m_i^j:=\FF(V_j,Q_i)$, we cover $Q_i$ with the family $\L_{\e_i l_i}(Q_i)$ of closed cubes 
with edge length $\e_i l_i$ as described in Section \ref{notation} and we set \(C_1^i\) and \(C_2^i\) for the corresponding sets defined in \eqref{cornici}. We define $Q_{i+1}$ to be the internal cube given by 
the construction, and we note that  $C^i_2$ and $Q_{i+1}$ are non-empty if, for instance,
\[ 
   \e_i =  \frac{1}{k\beta}\frac{m_i^{\frac{1}{d}}}{l_i} <\frac{1}{k}\le\frac{1}{6} ,
\]
which is guaranteed by our choice of $k$. Observe moreover that $C^i_1\cup C^i_2$ is a strip of 
width at most $2\e_i l_i$ around $\pa Q_i$, hence the side $l_{i+1}$ of $Q_{i+1}$ satisfies $(1-4\e_i)l_i\leq l_{i+1} <(1-2\e_i) l_i $. 

We denote with $K_j$ the concentration set of $V_j$ (that is $ V_j:=\theta_j\H^d\res K_j\otimes \delta_{T_x{K_j}}$), where $\theta_j \in L^1(K_j;[\delta,+\infty);\H^d)$) and apply Theorem \ref{deformationcube} to $Q_i$, $V_j^\delta:=\frac 1\delta V_j$ and $\e=\e_i l_i$, obtaining the map $\Phi_{i,j}= \Phi_{\e_i l_i,V_j^\delta}$.
Notice that we are in the hypotheses to apply Theorem \ref{deformationcube}, since the rescaled varifolds $V_j^\delta$ have density bigger or equal than one in $Q_i$, $K_j$ is a relatively closed subset of $\R^n\setminus H$ and $Q_0\cap  H=\emptyset$.

 We claim that, for every $j$ sufficiently large,
\begin{equation} \label{conditionji}
  m_i^j \leq k_2 (m_i^j-m_{i+1}^j) + o_j(1).
\end{equation}
Indeed, since $(V_j)_{j\in \N}$ is a minimizing sequence in the class $\P(H)$, then $(V_j^\delta)$ is a minimizing sequence in the class
$$\P_\delta (H):=\left \{\frac 1\delta W \, : \, W\in \P(H)\right \}.$$
Since we are just rescaling the density of the varifolds and $\P(H)$ is a good class, also $\P_\delta (H)$ is a good class and by \eqref{cost per area}, we have that
\begin{equation*}
\begin{split}
 \frac{1}{\delta} m_i^j = \FF(V_j^\delta,Q_i)&\leq   \frac{1}{\delta} m_i + o_j(1)\leq \Lambda \|(\Phi_{i,j})_\#V_j^\delta\|(Q_i) +o_j(1) \\
     &=  \Lambda \|(\Phi_{i,j})_\#V_j^\delta\|( Q_{i+1} ) + \Lambda \|(\Phi_{i,j})_\#V_j^\delta\| \left(C^i_1\cup C^i_2 \right) +o_j(1)\\
        &\leq  \Lambda \|(\Phi_{i,j})_\#V_j^\delta\| \left(C^i_1\cup C^i_2 \right) +o_j(1) \leq \frac{k_2}{\delta} (m_i^j-m_{i+1}^j) + o_j(1).
        \end{split}
\end{equation*}
The last inequality holds because $\|(\Phi_{i,j})_\#V_j^\delta\|( Q_{i+1} )=0$ for $j$ large enough: otherwise, by property (6) of Theorem \ref{deformationcube}, there would exist $T \in \L^*_{\e_i l_i,d}(Q_{i+1})$ such that $\|(\Phi_{i,j})_\#V_j^\delta\|(T)\geq \H^d(T)$. 
Together with property (ii) and by \eqref{cost per area}, this would imply
\begin{equation*}
l_i^d\e_i^d = \H^d(T) \leq \|(\Phi_{i,j})_\#V_j^\delta\|(T) \leq \frac{k_1}{\delta} \|V_j\|( Q_i )\leq \frac{k_1}{\delta \lambda} m_i^j \rightarrow \frac{k_1}{\delta \lambda} m_i
\end{equation*}
and therefore, substituting \eqref{eq:epsi},
$$ \frac{m_i}{ k^d\beta^d} \leq \frac{k_1}{\delta \lambda} m_i,$$
which is false if $ \beta$ is sufficiently small ($m_i>0$ because $x\in\spt(\|V\|)$). 
Passing to the limit in $j$ in \eqref{conditionji} we obtain (iv): 
\begin{equation}\label{eq:stimai}
m_{i+1}\leq \frac{k_2-1}{k_2} m_i. 
\end{equation}
Since $l_{i+1}\geq(1-4\e_i)l_i$, we can slightly shrink the cube $Q_{i+1}$ to a concentric cube $Q'_{i+1}$ 
with $l'_{i+1}\geq(1-6\e_i) l_i>0$, $\FF(V,\pa Q'_{i+1})=0$ and for which (iv) still holds, just getting a lower value for $m_{i+1}$. 
With a slight abuse of notation, we rename this last cube $Q'_{i+1}$ as $Q_{i+1}$.

We now show (iii). Using \eqref{eq:stimai} and condition (iii) for $Q_i$, we obtain
\begin{equation*}
\frac{m_{i+1}^\frac{1}{d}}{l_{i+1}} \leq \left(\frac{k_2-1}{k_2}\right)^\frac{1}{d}
\frac{m_i^\frac{1}{d}}{(1-6\e_i)l_i} < \left(\frac{k_2-1}{k_2}\right)^\frac{1}{d}
\frac{\beta}{1-6\e_i}.
\end{equation*}
The last quantity will be less than $\beta$ if
\begin{equation}\label{eq:i}
\left(\frac{k_2-1}{k_2}\right)^\frac{1}{d} \leq 1-6\e_i=1-\frac{6}{k \beta} \frac{m_i^\frac{1}{d}}{l_i} .
\end{equation}
In turn, inequality \eqref{eq:i} is true because (iii) holds for $Q_i$, provided we choose $k\geq 6 / \big(1-(1-1/k_2)^\frac{1}{d}\big)$. 
Furthermore, estimating $\e_0<1/k$ by (iii) and (v), we  also  have $\e_{i+1}\leq\e_i$.

We are left to prove (vi): $\lim_i m_i=0$ follows directly from (iv); regarding the non degeneracy of the cubes, note that 
\begin{equation*}
\begin{split}
\frac{l_\infty}{l_0}:=\liminf_i \frac{l_i}{l_0} &\geq  \prod_{i=0}^{\infty}(1-6\e_i)=\prod_{i=0}^{\infty}\left (1-\frac{6}{k \beta}\frac{m_i^\frac{1}{d}}{l_i}\right ) \\
&\geq
\prod_{i=0}^{\infty}\left (1-\frac{6m_0^\frac{1}{d}}{k \beta l_0 \prod_{h=0}^{i-1}(1-6\e_h)}\left(\frac{k_2-1}{k_2}\right)^\frac{i}{d} \right)\\
 &\geq\prod_{i=0}^{\infty}\left (1-\frac{6}{k(1-6\e_0)^i}\left(\frac{k_2-1}{k_2}\right)^\frac{i}{d}\right),
\end{split}
\end{equation*}
where we used  $\e_h\leq \e_0$ in the last inequality. Since $\e_0<1/k$, the last product is strictly positive, provided 
$$
k>\frac{6}{1-\left(\frac{k_2-1}{k_2}\right)^\frac{1}{d}},
$$
which is guaranteed by our choice of $k$. 
We conclude that $l_\infty>0$, which ensures claim \eqref{absurd lower bound}.

\subsection{Proof of Theorem \ref{thm generale}: rectifiability of the limiting varifold}   In this section, we prove condition $(d)$. Indeed, with the assumption on the uniform density lower bound of the minimizing sequence, by the previous step we know that $V$ enjoys the denisty lower bound \eqref{lower density estimate mu ball}. Moreover, by condition $(c)$, it is $F$-stationarity in $\R^n\setminus H$. Since $F$ is as in Definition \ref{atomica}, we are in the hypotheses to apply Theorem \ref{thm:rect} and to conclude that $V\res G(\R^n\setminus H)$ is a $d$-rectifiable varifold.

Moreover, by the previous step, for every $x \in \spt\, \|V\|\setminus H$ we have \eqref{lower density estimate mu ball}. It follows that
$$ \spt\, \|V\|\setminus H\subseteq \mbox{conc}(V) \subseteq \spt\, \|V\|.$$
We conclude that $\mbox{conc}(V)\setminus H = \spt\, \|V\|\setminus H$ and consequently that the concentration set is relatively closed in $\R^n\setminus H$.

\subsection{Proof of Theorem \ref{thm generale}: integrality of the limiting varifold}  In this section we prove that, under the further assumption that the minimizing sequence is made of integral varifolds satisfying 
$$\sup_j |\delta_FV_j|(W) <\infty, \qquad \forall \, W \subset \subset (\R^n \setminus H),$$
then $V\res G(\R^n\setminus H)$ is integral.
Indeed, we already know that $V$ enjoys the density lower bound \eqref{lower density estimate mu ball}.

We are in the hypotheses to apply Theorem \ref{integrality} with $U:=\R^n \setminus H$ and conclude that condition $(e)$ holds.


\begin{thebibliography}{10}

\bibitem{Allard}
W.~K. Allard.
\newblock {On the first variation of a varifold}.
\newblock {\em Ann. Math.}, 95:417--491, 1972.

\bibitem{Allardratio}
William~K. Allard.
\newblock A characterization of the area integrand.
\newblock In {\em Symposia {M}athematica, {V}ol. {XIV} ({C}onvegno di {T}eoria
  {G}eometrica dell'{I}ntegrazione e {V}ariet\`a {M}inimali, {INDAM}, {R}ome,
  1973)}, pages 429--444. Academic Press, London, 1974.

\bibitem{Allard84BOOK}
William~K. Allard and Frederick~J. Almgren, Jr., editors.
\newblock {\em Geometric measure theory and the calculus of variations},
  volume~44 of {\em Proceedings of Symposia in Pure Mathematics}. American
  Mathematical Society, Providence, RI, 1986.

\bibitem{AlmgrenINVITATION}
F.~J.~Jr. Almgren.
\newblock {\em {Plateaus' problem. An invitation to varifold geometry}}.
\newblock {Mathematics Monograph Series}. W. A. Benjamin, Inc., New
  York-Amsterdam, 1966.

\bibitem{Almgren68}
F.~J.~Jr Almgren.
\newblock {Existence and regularity almost everywhere of solutions to elliptic
  variational problems among surfaces of varying topological type and
  singularity structure}.
\newblock {\em Ann. Math.}, 87:321--391, 1968.

\bibitem{Almgren76}
F.~J.~Jr. Almgren.
\newblock {Existence and regularity almost everywhere of solutions to elliptic
  variational problems with constraints}.
\newblock {\em Mem. Amer. Math. Soc.}, 4(165):viii+199 pp, 1976.

\bibitem{AFP}
L.~Ambrosio, N.~Fusco, and D.~Pallara.
\newblock {\em {Functions of bounded variation and free discontinuity
  problems}}.
\newblock {Oxford Mathematical Monographs}. The Clarendon Press, Oxford
  University Press, New York, 2000.

\bibitem{DavidSemmes}
G.~David and S.~Semmes.
\newblock {Uniform rectifiability and quasiminimizing sets of arbitrary
  codimension.}
\newblock {\em Memoirs of the American Mathematical Society}, pages viii+132
  pp., 2000.

\bibitem{DeGiorgiSOFP1}
E.~{De Giorgi}.
\newblock {Su una teoria generale della misura $(r-1)$-dimensionale in uno
  spazio ad $r$-dimensioni}.
\newblock {\em Ann. Mat. Pura Appl. (4)}, 36:191--213, 1954.

\bibitem{DeLellisNOTES}
C.~{De Lellis}.
\newblock {\em {Rectifiable sets, densities and tangent measures}}.
\newblock {Zurich Lectures in Advanced Mathematics}. European Mathematical
  Society, Z{\"u}rich, 2008.

\bibitem{DeLDeRGhi16}
C.~De~Lellis, A.~De~Rosa, and F.~Ghiraldin.
\newblock A direct approach to the anisotropic {P}lateau's problem.
\newblock {\em Available on arxiv}, 2016.

\bibitem{DeLGhiMag}
C.~{De Lellis}, F.~Ghiraldin, and F.~Maggi.
\newblock {A direct approach to Plateau's problem}.
\newblock {\em JEMS}, 2014.

\bibitem{DePDeRGhi3}
G.~De~Philippis, A.~De~Rosa, and F.~Ghiraldin.
\newblock {Existence results for minimizers of parametric elliptic
  functionals}.
\newblock 2016.
\newblock {Forthcoming}.

\bibitem{DePDeRGhi2}
G.~De~Philippis, A.~De~Rosa, and F.~Ghiraldin.
\newblock {Rectifiability of varifolds with locally bounded first variation
  with respect to anisotropic surface energies}.
\newblock Accepted on {\em {Comm. Pure Appl. Math.}}, 2016.
\newblock  Available on arxiv

\bibitem{DePDeRGhi}
G.~De~Philippis, A.~De~Rosa, and F.~Ghiraldin.
\newblock A direct approach to {P}lateau's problem in any codimension.
\newblock {\em Adv. in Math.}, 288:59--80, January 2015.

\bibitem{FedererBOOK}
H.~Federer.
\newblock {\em {Geometric measure theory}}, volume 153 of {\em {Die Grundlehren
  der mathematischen Wissenschaften}}.
\newblock Springer-Verlag New York Inc., New York, 1969.

\bibitem{federer60}
H.~Federer and W.~H. Fleming.
\newblock {Normal and integral currents}.
\newblock {\em Ann. Math.}, 72:458--520, 1960.

\bibitem{HarrisonPugh14}
J.~Harrison and H.~Pugh.
\newblock {Existence and soap film regularity of solutions to {P}lateau's
  problem}.
\newblock {\em Advances in Calculus of Variations}, 9(4):357–394, 2015.

\bibitem{HarrisonPugh15}
J.~Harrison and H.~Pugh.
\newblock {Solutions To Lipschitz Variational Problems With Cohomological
  Spanning Conditions}.
\newblock 2015.
\newblock arxiv:1506.01692.

\bibitem{HarrisonPugh16}
J.~Harrison and H.~Pugh.
\newblock {General Methods of Elliptic Minimization}.
\newblock 2016.
\newblock arxiv: 1603.04492.

\bibitem{Preiss}
D.~Preiss.
\newblock {Geometry of measures in {${\bf R}\sp n$}: distribution,
  rectifiability, and densities}.
\newblock {\em Ann. of Math. (2)}, 125(3):537--643, 1987.

\bibitem{reifenberg1}
E.~R. Reifenberg.
\newblock {Solution of the {P}lateau problem for $m$-dimensional surfaces of
  varying topological type}.
\newblock {\em Acta Math.}, 104:1--92, 1960.

\bibitem{schoensimonalmgren}
R.~Schoen, L.~Simon, and F.~J.~Jr. Almgren.
\newblock {Regularity and singularity estimates on hypersurfaces minimizing
  parametric elliptic variational integrals. I, II.}
\newblock {\em Acta Math.}, 139(3-4):217--265, 1977.

\bibitem{SimonLN}
L.~Simon.
\newblock {\em {Lectures on geometric measure theory}}, volume~3 of {\em
  {Proceedings of the Centre for Mathematical Analysis}}.
\newblock Australian National University, Centre for Mathematical Analysis,
  Canberra, 1983.

\end{thebibliography}
\end{document}